\renewcommand*{\thefootnote}{\fnsymbol{footnote}}
\newtheorem{theorem}{Theorem}[section]
\newtheorem{theo}[theorem]{Theorem}
\newtheorem{lem}[theorem]{Lemma}
\newtheorem*{lem*}{Lemma}
\newtheorem*{theo*}{Theorem}
\newtheorem{rem}[theorem]{Remark}
\title[]{On solvability of certain equations of arbitrary length over torsion-free groups }
\author{ M. Fazeel Anwar$^{1*}$}
\author{Mairaj Bibi$^{2}$}
\author{ M. Saeed Akram$^{3}$ }
\begin{document}
	\footnotetext[1]{Corresponding Author}
	\renewcommand*{\thefootnote}{\arabic{footnote}}

	\footnotetext[1]{Department of Mathematics, Sukkur IBA University.  e-mail: fazeel.anwar@iba-suk.edu.pk}
	
	\footnotetext[2]{Department of Mathematics, COMSATS Institute of Information Technology, Islamabad. email: mairaj\_maths@comsats.edu.pk }
	
	\footnotetext[3]{Department of Mathematics, Khawaja Fareed UEIT, email: mrsaeedakram@gmail.com}

\begin{abstract}
	Let $G$ be a non-trivial torsion free group and  $s(t)=g_{1}t^{\epsilon_{1}}g_{2}t^{\epsilon_{2}} \cdots g_{n}t^{\epsilon_{n}}=1 \; (g_{i} \in G,\ \epsilon_i=\pm 1)$ be an equation over $G$ containing no blocks of the form $t^{-1}g_{i}t^{-1}, \; g_{i} \in G$. In this paper we show that $s(t)=1$ has a solution over $G$ provided a single relation on coefficients of $s(t)$ holds. We also generalize our results to equations containing higher powers of $t$. The later equations are also related to Kaplansky zero-divisor conjecture \cite{K}.
	
	\medskip
	
\noindent \textbf{ Keywords:} Asphericity; relative group presentations; torsion-free groups; Group Equations. \\
	\noindent \textbf{AMSC:} 20F05, 57M05

\end{abstract}

\maketitle

\section{Introduction} 

Let $G$ be a non-trivial group, $t$ be an unknown and let $F$ be a free group generated by $t$.  An equation in $ t$ over $G$ is an expression of the form
$$
s(t)=g_{1}t^{\epsilon_{1}}g_{2}t^{\epsilon_{2}} \cdots g_{n}t^{\epsilon_{n}}=1 \ (g_{i} \in G,\ \epsilon_i=\pm 1)
$$
in which it is assumed that $\epsilon_{i}+\epsilon_{i+1}=0$ implies $g_{i+1} \neq 1$ in $G$. The integer $n$ is called the length of the equation. The equation $s(t)=1$ is said to have a solution over $G$ if there is an embedding $\phi$ of $G$ into a group $H$ and an element $h \in H$ such that $\phi(g_{1})h^{\epsilon_{1}}\phi(g_{2})h^{\epsilon_{2}} \cdots \phi(g_{n})h^{\epsilon_{1}}=1$ in $H$. Equivalently $s(t)=1$ has a solution over $G$ if and only if the natural map from $G$ to $\langle G*F|s(t) \rangle$ is injective, where $G*F$ is the free product of $G$ and $F$. In \cite{levin}, Levin conjectured that every equation is solvable over a torsion free group. A significant work has been done to verify Levin's conjecture. In \cite{P}, Prishchepov used results of Brodskii and Howie \cite{BH} to show that the conjecture is true for $n \leq 6$. A different proof of the same theorem was given by Ivanov and Klyachko \cite{IK}. In \cite{BE}, Bibi and Edjvet proved that the conjecture holds for $n =7$. The authors considered a nonsingular equation of length $8$ in \cite{ABIA} and proved that the conjecture holds. The proofs in \cite{BE} and \cite{ABIA} are given by considering all possible conditions on elements of the group $G$. The number of such cases become extremely large as length of the equation increases. 

In this paper we consider equations of arbitrary length and show that the Levin conjecture holds under some mild conditions. These results can half the number of cases in \cite{ABIA} and can also significantly reduce the number of cases one has to consider for all equations of length greater than or equal to $8$. Our main results are the following

\begin{theo*}
	Let $\displaystyle s(t)=g_{1}E_{k_1}g_{2}E_{k_2} \, \cdots \, g_{n}E_{k_n}$, such that $g_1 ,  \, \cdots  \, , g_n \in G$ and $E_{k_i}=t a_{k_{i-1}+1} t \, \cdots \, t a_{k_i}t^{-1}$ with $k_0 =0$ and $k_i \geq 1$ for all $i \geq 1$. If $g_{j}=g_{1}^{-1}$ for all $j \geq 2$ then $s(t)=1$ has a solution over $G$.
\end{theo*}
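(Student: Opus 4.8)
The plan is to translate solvability into the embedding statement for $\langle G*F\mid s(t)\rangle$ and then split according to the $t$-exponent sum of $s(t)$. Writing $m_i=k_i-k_{i-1}\ge 1$ for the number of positive $t$'s in the block $E_{k_i}$, the retraction $G*F\to F$ that kills $G$ sends $s(t)$ to $t^{\sigma}$ with $\sigma=\sum_{i=1}^{n}(m_i-1)=k_n-n\ge 0$. Thus $\sigma=0$ precisely when every block is \emph{short}, i.e.\ $E_{k_i}=t a_{k_i}t^{-1}$ with a single positive $t$, and $\sigma>0$ as soon as one block contains two or more positive $t$'s. This dichotomy is the backbone of the argument, and, notably, the hypothesis $g_j=g_1^{-1}$ is needed only in the degenerate case $\sigma=0$.

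When $\sigma>0$ I would not use the hypothesis at all: $s(t)=1$ is then a nondegenerate equation over the torsion-free group $G$ with nonzero exponent sum, and I would invoke the solvability of nonzero exponent-sum equations over torsion-free groups (Klyachko's theorem and the results around \cite{IK}). The only thing to verify is nondegeneracy, which follows from the standing convention: each block-terminal coefficient $a_{k_i}$ sits between a $t$ and the closing $t^{-1}$ and so is $\ne 1$, while each junction contributes the factor $t^{-1}g_1^{-1}t$ with $g_1\ne 1$.

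The heart of the matter is $\sigma=0$, where every block is $t a_i t^{-1}$ and
$$s(t)=g_1\,(t a_1 t^{-1})\,g_1^{-1}(t a_2 t^{-1})\,g_1^{-1}\cdots g_1^{-1}(t a_n t^{-1}).$$
Here I would set $u=t^{-1}g_1^{-1}t$ and use the hypothesis decisively: every internal junction $t^{-1}g_{i+1}t$ equals the \emph{same} element $u$, because $g_{i+1}=g_1^{-1}$ for all $i$. Collapsing the junctions gives $s(t)=g_1\,t\,(a_1 u a_2 u\cdots u a_n)\,t^{-1}$, so that, for a fixed $t$, the equation $s(t)=1$ is equivalent to the one-variable equation
$$\widetilde{s}(u):=a_1 u a_2 u\cdots u a_n u^{-1}=1,\qquad u=t^{-1}g_1^{-1}t .$$
Were the $g_j$ arbitrary, the junctions would produce $n-1$ distinct conjugates $t^{-1}g_{j}t$ coupled by the single conjugator $t$, which is intractable; the hypothesis is exactly what reduces this to one unknown. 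Now $\widetilde{s}(u)=1$ has exponent sum $n-2$ and all coefficients $a_i$ nontrivial (again by nondegeneracy), so for $n\ge 3$ it is a nondegenerate nonzero exponent-sum equation solvable over $G$ as above, while for $n=2$ it is the conjugacy relation $u a_2 u^{-1}=a_1^{-1}$, realized by an HNN extension.

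Finally I would lift a solution $u_0\in H_0\supseteq G$ of $\widetilde{s}$ back to a solution of $s(t)=1$ by adjoining a stable letter: form $H=\langle H_0,t\mid t^{-1}g_1^{-1}t=u_0\rangle$, in which $G\hookrightarrow H_0\hookrightarrow H$ and $t$ satisfies $s(t)=1$ by the equivalence above. I expect the main obstacle to lie exactly here. This HNN extension embeds $H_0$ only if $g_1^{-1}$ and $u_0$ generate isomorphic, hence both infinite cyclic, subgroups, so one must guarantee that $u_0$ has infinite order. Since $g_1^{-1}$ is of infinite order ($G$ torsion-free, $g_1\ne 1$ forced by nondegeneracy), the delicate point is to produce the solution of $\widetilde{s}$ inside an overgroup in which $u_0$ is not torsion, so that $u_0\ne 1$ forces $\mathrm{ord}(u_0)=\infty$; the exceptional possibility $u_0=1$ forces $a_1\cdots a_n=1$ and must be handled as a separate degenerate subcase. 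Controlling this order, and checking that nondegeneracy is preserved through the reduction, is where the real work sits.
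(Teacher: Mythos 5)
There is a genuine gap, and it sits exactly where your argument does all its work: the appeal to ``solvability of nonzero exponent-sum equations over torsion-free groups.'' No such theorem is available. Klyachko's result (and the machinery around \cite{IK}) covers equations of exponent sum $\pm 1$; for arbitrary nonsingular equations over torsion-free groups, solvability is precisely the open part of Levin's conjecture --- this is why the papers \cite{BE} and \cite{ABIA} must grind through massive case analyses for \emph{nonsingular} equations of lengths $7$ and $8$. Consequently your $\sigma>0$ branch is unsupported whenever $\sigma\geq 2$ (if it were supported, the theorem under discussion would be almost vacuous, since the hypothesis $g_j=g_1^{-1}$ would be needed only when every block is $ta_it^{-1}$). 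The same invalid invocation reappears in your $\sigma=0$ branch: the reduced equation $\widetilde{s}(u)=a_1ua_2u\cdots ua_nu^{-1}=1$ has exponent sum $n-2$, so Klyachko covers only $n=3$, and $n\geq 4$ is again open territory. On top of this, your final HNN lifting step requires $u_0=t^{-1}g_1^{-1}t$ to have infinite order in a torsion-free overgroup, a point you flag yourself as ``where the real work sits'' --- so even by its own account the proposal is not a complete proof.

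It is worth noting that your opening move is the same as the paper's: the substitution $u=t^{-1}g_1^{-1}t$ (the paper's $x$) is exactly how the proof begins. But instead of eliminating $t$ and hunting for a black-box solvability theorem for the shorter equation, the paper keeps \emph{both} relations --- the rewritten equation $x^{-1}a_1t\cdots ta_{k_1}xa_{k_1+1}t\cdots ta_{k_n}$ and the defining relation $t^{-1}g_1^{-1}tx^{-1}$ --- as a two-relator relative presentation, and verifies asphericity directly via the Bogley--Pride weight test \cite{BP} (using the lemma from \cite{K} to justify the two-step substitution). Asphericity of the orientable relative presentation then gives injectivity of $G\to\langle G*F\mid s(t)\rangle$ with no exponent-sum dichotomy at all, and the torsion-freeness of $G$ enters only in ruling out admissible cycles of small weight, not through any order computation for $u_0$. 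If you want to salvage your outline, the missing ingredient is precisely such an asphericity argument in place of the nonexistent general solvability theorem.
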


\begin{theo*}
	Suppose $m_{1}, m_{2}, \; \cdots \;  ,m_{k_{n}}$ are positive integers such that $m_{k_{1}} > m_{k_i}$ and $m_{k_{i}+1} > m_{k_{1}+1}$ for all $i \geq 2$ with $m_{k_{1}+1}=m_{k_{n}}$. Let $\displaystyle s(t)=g_{1}E_{k_1}g_{2}E_{k_2} \, \cdots \, g_{n}E_{k_n}$ such that $g_1 , g_2  \, \cdots  \,  g_n \in G$ and $E_{k_i}=t^{m_{k_{i-1}+1}} a_{k_{i-1}+1} t^{m_{k_{i-1}+2}} \, \cdots \,  t^{m_{k_{i}-1}} a_{k_i}t^{-m_{k_{i}}}$ with $k_0 =0$ and $k_i \geq 2$ for all $i \geq 1$. If $g_{j}=g_{1}^{-1}$ for all $j \geq 2$ then $s(t)=1$ has a solution over $G$.
\end{theo*}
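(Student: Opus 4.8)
The plan is to recast solvability as an embedding problem and then attack it with the relative-presentation/asphericity machinery, exactly as for the unit-power theorem but with the angle bookkeeping adapted to the higher powers of $t$. As recalled in the introduction, $s(t)=1$ is solvable over $G$ if and only if the natural map $G \to \langle G * F \mid s(t)\rangle$ is injective, so it suffices to prove that the relative presentation $\mathcal{P}=\langle G, t \mid s(t)\rangle$ is aspherical (in the sense of Bogley--Pride), since asphericity yields the relative Freiheitssatz, i.e. the required injectivity. I would therefore reduce the theorem to the statement that $\mathcal{P}$ admits no reduced spherical picture.

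First I would use the coefficient hypothesis $g_j=g_1^{-1}$ ($j\geq 2$) to normalize the relator. Writing $c=g_1$, we have $s(t)=c\,E_{k_1}\,c^{-1}E_{k_2}c^{-1}\cdots c^{-1}E_{k_n}$; since the group defined by $\mathcal{P}$ is unchanged by replacing $s(t)$ with a conjugate, I would cyclically conjugate so that the relator is a concatenation of blocks with the single constant $c^{\pm1}$ sitting at each block interface and the letters $a_i$ interleaved with the $t$-powers inside the blocks. I would then form the star graph $\Gamma$ of $\mathcal{P}$, whose vertices are $t,t^{-1}$ and whose edges record, for each pair of consecutive occurrences of $t^{\pm1}$ in $s(t)$ read cyclically, the intervening $G$-element (trivial inside a power block, or one of the $a_i$ or $c^{\pm1}$ at a syllable boundary). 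The standing hypothesis that $s$ has no block $t^{-1}g\,t^{-1}$ with $g\neq 1$ (each $E_{k_i}$ carries only the one negative run $t^{-m_{k_i}}$, immediately followed across the interface by $g_{i+1}t^{+\cdots}$) sharply restricts which oriented edges occur in $\Gamma$, and this is what will make the subsequent weight test feasible.

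Next I would run a curvature argument. I would assign an angle to every corner of the relator region, equivalently a weight to every edge of $\Gamma$, chosen so that in any putative reduced spherical picture over $\mathcal{P}$ every region and every admissible vertex receives non-positive curvature; by the combinatorial Gauss--Bonnet formula a nonempty reduced spherical picture would then force total curvature $4\pi>0$, a contradiction, giving asphericity. This is exactly where the numerical hypotheses enter: the inequalities $m_{k_1}>m_{k_i}$ and $m_{k_i+1}>m_{k_1+1}$ (with $m_{k_1+1}=m_{k_n}$) are what guarantee that the length mismatches along the $t$-arcs forbid the short reduced cycles in $\Gamma$ that would otherwise produce positive-curvature vertices, so that every reduced cycle in the star graph has weight at least $2$. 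In effect, the exponent conditions play for the higher-power equation the structural role that the single relation on the $g_i$ played in the unit-power theorem.

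The main obstacle I anticipate is verifying these curvature bounds uniformly in the unknown group $G$: because $G$ is an arbitrary torsion-free group, the labels of the interior regions are words in the $a_i$ and $c$ that \emph{might} collapse in $G$, and one must establish non-positive curvature for every way this can occur. Concretely this becomes a finite case analysis over the possible identifications of corner labels at the block interfaces, and the delicate cases are precisely those where two blocks $E_{k_i},E_{k_j}$ abut; there I expect to combine torsion-freeness (to exclude periodic coincidences among the $t$-powers) with the strict inequalities on the $m$'s to close off each configuration. Organizing this analysis so that $m_{k_1}>m_{k_i}$ and $m_{k_i+1}>m_{k_1+1}$ are seen to cover \emph{all} interface configurations is, I expect, the real work of the proof.
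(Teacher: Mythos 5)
Your reduction of solvability to Bogley--Pride asphericity of $\mathcal{P}=\langle G,t\mid s(t)\rangle$ matches the paper's framework, but there is a genuine gap at the decisive step: the paper does \emph{not} run the weight test on the one-relator presentation you set up. Its key move is a change of variables, $x=t^{-m_{k_1}}g_1^{-1}t^{m_{k_1+1}}$, which rewrites each interface $t^{-m_{k_i}}g_1^{-1}t^{m_{k_i+1}}$ as $t^{m_{k_1}-m_{k_i}}\,x\,t^{m_{k_i+1}-m_{k_1+1}}$ and yields a \emph{two}-relator presentation $\langle G, t, x \mid \tilde{s},\ t^{-m_{k_1}}g_1^{-1}t^{m_{k_1+1}}x^{-1}\rangle$; the lemma of Kim quoted in the preliminaries is what allows asphericity to be tested on this lifted presentation. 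This shows you have misread the role of the numerical hypotheses: $m_{k_1}>m_{k_i}$ and $m_{k_i+1}>m_{k_1+1}$ are not there to ``forbid short reduced cycles via length mismatches'' in the star graph of the original presentation --- they guarantee that the exponents $m_{k_1}-m_{k_i}$ and $m_{k_i+1}-m_{k_1+1}$ created by the substitution are positive, and the equality $m_{k_1+1}=m_{k_n}$ kills the wrap-around power $t^{m_{k_1+1}-m_{k_n}}$, so that the rewritten relator contains only positive powers of $t$ besides the $x^{\pm1}$. Only in the resulting four-vertex star graph (vertices $t^{\pm1},x^{\pm1}$) does the paper's simple $0/1$ assignment ($\theta(g_1^{-1})=\theta(a_{k_n})=0$ plus two trivially labelled edges of weight $0$) satisfy (W1)--(W3), with every admissible cycle of weight less than $2$ forcing torsion in $G$.

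Your direct attack on the original star graph is unlikely to be repairable as described. That graph has only the two vertices $t^{\pm1}$; the coefficient $g_1^{-1}$ occurs $n-1$ times and the corners interior to each run $t^{m}$ with $m\geq 2$ are all labelled by $1\in G$, so the graph contains families of parallel edges with identical labels. Any two such edges bound an admissible $2$-cycle, forcing their weights to sum to at least $2$, which pushes essentially all weights up to $1$; meanwhile (W1) applied to your single long relator demands total deficiency at least $2$, i.e.\ some zero-weight corners, and no choice of these is uniform in the data: for instance assigning $\theta(a_1)=\theta(a_{k_n})=0$ fails whenever $a_1=a_{k_n}$ in $G$, a coincidence the theorem's hypotheses do not exclude since the $a_i$ are arbitrary. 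The substitution is precisely what disconnects these dangerous configurations, by isolating the single $g_1^{-1}$-edge in the short second relator and changing the endpoints of the trivially labelled edges. You yourself flag that organizing the interface analysis is ``the real work of the proof''; that work is exactly what the proposal leaves undone, and without the change of variables (or an equally substantive device) the curvature bookkeeping you sketch does not close.
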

	
We also take some specific equations of the above type and demonstrate their solvability. In some cases we obtain stronger results for specific equations. 

\section{Preliminaries}
A relative group presentation is a presentation of the form $\mathcal{P}=\langle G, \; x \; | \; r \rangle$ where $r$ is a set of cyclically reduced words in $G * \langle x \rangle$. If the relative presentation is orientable and aspherical then the natural map from $G$ to $\langle G, \; x \; | \; r \rangle$ is injective. In our case $x$ and $r$ consist of the single element $t$ and $s(t)$ respectively, therefore $\mathcal{P}$ is orientable and so asphericity implies $s(t)=1$ is solvable. In this paper we use the weight test to show that $\mathcal{P}$ is aspherical \cite{BP}.

The star graph $\Gamma$ of $\mathcal{P}$ has vertex set $x \cup x^{-1}$ and edge set $r^{*}$, where $r^{*}$ is the set of all cyclic permutations of the elements of $r \cup r^{-1}$ which begin with an element of $x \cup x^{-1}$. For $R \in r^{*}$ write $R=Sg$ where $g \in G$ and $S$ begins and ends with $x$ symbols. Then $\mathfrak{i}(R)$ is the inverse of the last symbol of $S$, $\tau(R)$ the first symbol of $S$ and $\lambda(R)=g$. A weight function $\theta$ on $\Gamma$ is a real valued function on the set of edges of $\Gamma$ which satisfies $\theta(Sh)=\theta(S^{-1}h^{-1})$. A weight function $\theta$ is called aspherical if the following three conditions are satisfied

\begin{enumerate}
	\item Let $R \in r^{*}$ with $R=x_{1}^{\epsilon_{1}}g_{1} \; \cdots \; x_{n}^{\epsilon_{n}}g_{n}$. Then 
	$$\sum_{i=1}^{n}\; (1-\theta(x_{i}^{\epsilon_{i}}g_{i} \; \cdots \; x_{n}^{\epsilon_{n}}g_{n}x_{1}^{\epsilon_{1}} g_{1} \; \cdots \; x_{i-1}^{\epsilon_{i-1}} g_{i-1})) \geq 2. $$
	
	\item Each admissible cycle in $\Gamma$ has weight at least $2$ (where admissible means having a label trivial in $G$).
	
	\item Each edge of $\Gamma$ has a non-negative weight.
\end{enumerate}    

If $\Gamma$ admits an aspherical weight function then $\mathcal{P}$ is aspherical \cite{BP}. The following lemma \cite{K} tells us that we can apply asphericity test in $k-$steps.

\begin{lem*}
Let the relative presentation $P = \langle H, x : r \rangle$ define a group $G$
and let $Q = \langle G, t : s \rangle$ be another relative presentation. If $Q$ and $P$ are both aspherical,
then the relative presentation $R = \langle H, x \cup t : r \cup \tilde{s} \rangle$ is aspherical, where $\tilde{s}$ is an element
of $H * F(x) * F(t)$ obtained from $s$ by lifting.
\end{lem*}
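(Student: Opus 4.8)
The plan is to argue via the theory of spherical pictures (equivalently, reduced spherical diagrams) over relative presentations in the sense of Bogley--Pride \cite{BP}, using the characterisation that a relative presentation is aspherical precisely when every spherical picture over it can be reduced to the empty picture by bridge moves and cancellation of complementary disc pairs. Accordingly I would take an arbitrary spherical picture $\mathbb{P}$ over $R=\langle H, x\cup t : r\cup\tilde s\rangle$ and aim to reduce it to the empty picture, drawing on the reducibility of spherical pictures over $P$ and over $Q$.

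The first step is to exploit the fact that the generator $t$ occurs in $\tilde s$ but not in $r$. Consequently every arc of $\mathbb{P}$ labelled by $t$ is incident only to discs labelled $\tilde s^{\pm 1}$, whereas the discs labelled $r^{\pm 1}$ and the arcs labelled $x$ lie in the regions complementary to the $t$-arcs. Isolating the $\tilde s$-discs together with all $t$-arcs yields a sub-picture $\mathbb{P}_t$ whose complementary regions are filled entirely by pictures over $P$.

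Next I would collapse each such $P$-region. Since $P$ defines $G$, the boundary word of every region, read in $H*F(x)$, represents a well-defined element of $G$; replacing each region by its value in $G$ converts $\mathbb{P}_t$ into a spherical picture $\mathbb{Q}$ over $Q=\langle G, t:s\rangle$. The essential bookkeeping here is that, because $\tilde s$ is obtained from $s$ by lifting, the boundary label of each $\tilde s$-disc collapses to exactly $s$, so $\mathbb{Q}$ is genuinely a picture over $Q$. Asphericity of $Q$ then forces $\mathbb{Q}$ to be reducible, and the reducing moves --- bridge moves across $t$-arcs and cancellation of complementary $\tilde s$-disc pairs --- lift back to moves on $\mathbb{P}$ that remove all $\tilde s$-discs and all $t$-arcs. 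The residue is a spherical picture over $P$ alone, which asphericity of $P$ reduces to the empty picture; hence $\mathbb{P}$ itself is reducible and $R$ is aspherical.

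The step I expect to be the main obstacle is the lifting in the previous paragraph: verifying that a reduction carried out downstairs over $Q$ can be realised upstairs over $R$. Each bridge move or cancellation over $Q$ alters the elements of $G$ carried on the $t$-arcs by multiplication, and upstairs this change must be matched by modifying the interposed $P$-sub-pictures --- inserting or deleting $r$-discs so that the two $\tilde s$-discs to be cancelled present genuinely inverse boundary words in $H*F(x)$, not merely words with equal image in $G$. Controlling these modifications without introducing a fresh spherical obstruction is exactly where asphericity of $P$ is invoked a second time, and this coherence is the technical heart of the argument; once it is established, the remaining steps are routine bookkeeping.
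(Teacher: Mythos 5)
The first thing to note is that the paper does not prove this lemma at all: it is stated in the Preliminaries and imported wholesale from Kim \cite{K}, so there is no internal proof to compare you against, and your sketch must be judged on its own merits. Your overall strategy --- work with spherical pictures in the sense of \cite{BP}, observe that $t$-arcs can only meet $\tilde s^{\pm 1}$-discs, collapse the complementary subpictures over $P$ to their values in $G$ so as to obtain a spherical picture $\mathbb{Q}$ over $Q$, cancel its discs using asphericity of $Q$, and finish off the residual picture over $P$ using asphericity of $P$ --- is the standard tower argument for composition statements of this kind, and it is the right idea.

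However, as a proof the proposal has a genuine gap, and it sits exactly where you place it: the lifting step is announced, not executed, and your closing sentence (``once it is established, the remaining steps are routine bookkeeping'') concedes that the heart of the proof is missing. Concretely, to realise a dipole cancellation of two $\tilde s^{\pm 1}$-discs upstairs you must excise the pair together with the $t$-arcs joining them and patch the resulting hole, whose boundary label is a word in $H * F(x)$ trivial in $G$, by a picture over $P$; for this substitution to be legitimate you need to know that it is effected by the allowed moves on pictures over $R$, and that is precisely the standard lemma that over an aspherical presentation $P$ any two pictures with the same boundary label are equivalent relative to their boundary (proved by doubling: glue one picture to the mirror image of the other and empty the resulting spherical picture using asphericity of $P$). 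Without isolating and proving this statement, ``inserting or deleting $r$-discs so that the boundary words become genuinely inverse in $H*F(x)$'' is an intention rather than an argument. Two smaller points of bookkeeping also need repair: for $\mathbb{Q}$ to be a picture over $Q$ it is not enough that each corner word between consecutive $t$-arcs ``represents a well-defined element of $G$'' --- you must check that every region of $\mathbb{Q}$ carries a label trivial in $G$, which holds exactly because the excised $P$-subpicture spans that region's boundary; and after the last $\tilde s$-disc is removed you must also delete any floating closed $t$-arcs before the residue is literally a spherical picture over $P$. With these points made precise your argument closes, so the verdict is: correct strategy, but the acknowledged technical heart is exactly the part that still has to be written.
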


It is clear from our definition of a group equation that if $g_{i}$ is a coefficient between a negative and a positive power of $t$ than $g_{i}$ is not trivial in $G$. This fact will be used in all subsequent proofs without reference. 


\section{Main Results}

We start by solving a general equation containing two negative powers and then give a similar result for three negative powers. We also give the corresponding results for higher powers of $t$. 

\begin{lem}\label{lsgtype1} The equation $g_1tg_2t \cdots g_{i-1}t^{-1}g_i t \cdots g_n t^{-1}=1$ is solvable if $g_i =g_1^{-1}$.
\end{lem}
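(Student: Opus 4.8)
The plan is to deduce solvability from asphericity of the relative presentation $\mathcal{P}=\langle G,\,t\mid s(t)\rangle$: by the discussion in Section 2 this presentation is orientable, so once it is shown to be aspherical the natural map $G\to\langle G*F\mid s(t)\rangle$ is injective and $s(t)=1$ acquires a solution over $G$. To prove asphericity I would invoke the weight test, so the whole argument reduces to producing an aspherical weight function $\theta$ on the star graph $\Gamma$ of $\mathcal{P}$.

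First I would write down $\Gamma$ explicitly. Its vertex set is $\{t,t^{-1}\}$, and each corner of the cyclic word $s$ contributes an edge $e_j$ joining $t^{-\epsilon_j}$ to $t^{\epsilon_{j+1}}$ with label $g_{j+1}$. Since the only negative exponents sit at the syllable preceding $g_i$ and at the final syllable, the subwords $t^{-1}g_i t$ and (cyclically) $t^{-1}g_1 t$ produce two loops at the vertex $t$, with labels $g_i$ and $g_1$; the subwords $t g_{i-1}t^{-1}$ and $t g_n t^{-1}$ produce loops at $t^{-1}$ with labels $g_{i-1}$ and $g_n$; and every remaining corner $t g_{j+1} t$ gives an edge between $t^{-1}$ and $t$. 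The hypothesis $g_i=g_1^{-1}$ is exactly what forces the two loops at $t$ to carry mutually inverse labels, so the length-two closed path through them has label $g_1 g_i=1$ and is admissible. This is the one admissible cycle that the hypotheses guarantee.

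I would then try to define $\theta$ by giving each of the two loops at $t$ weight $1$, so the forced admissible cycle has weight exactly $2$, and distribute the remaining weight over the loops at $t^{-1}$ and the connecting edges. Conditions (3) and (1) are the routine ones: (3) is nonnegativity, and (1) collapses to the single inequality $\sum_{j=1}^{n}\theta(e_j)\le n-2$ obtained by summing over the $n$ corners of $s$, whose budget $n-2$ grows with the length and leaves slack once the two distinguished loops are paid for. Using that the coefficients lying between opposite powers of $t$, namely $g_1,g_i,g_{i-1},g_n$, are all non-trivial, I would rule out every admissible loop of length one, disposing of the easy part of condition (2).

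The hard part, and the real obstacle, is condition (2) for the remaining admissible cycles. Because $G$ is an arbitrary torsion-free group and the coefficients other than $g_i$ are unconstrained, coincidental relations such as $g_n=g_{i-1}^{-1}$, or equalities among the labels of the connecting edges, can create further admissible two-cycles that a uniform weighting cannot make heavy without breaching the budget from condition (1); indeed in the fully alternating degenerate situation one checks that the budget $n-2$ is already exhausted by the loops at $t$ alone. My plan to overcome this is twofold: choose $\theta$ adapted to precisely which products of coefficients vanish in $G$, so that each genuinely admissible two-cycle still receives total weight $2$ while the budget is respected whenever the length is large; and dispose of the residual small-length and degenerate configurations by peeling off a simpler aspherical sub-presentation and applying the $k$-step lemma of Section 2 together with the known solvability of equations of length at most six. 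I expect verifying condition (2) across these configurations to be where essentially all the work lies.
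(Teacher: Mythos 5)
Your framework (orientability plus a weight-test proof of asphericity implies solvability) is the same as the paper's, but your plan diverges at the decisive step, and you have correctly diagnosed why your version cannot be completed as described: attacking the one-relator presentation $\langle G,t \mid s(t)\rangle$ head-on, the star graph has only the two vertices $t^{\pm1}$, condition (1) caps the total corner weight at $n-2$, and condition (2) is uncontrollable because interior coefficients between equal powers of $t$ may be trivial or satisfy arbitrary coincidences, so admissible short cycles proliferate. Your proposed remedies --- a weight function ``adapted to precisely which products of coefficients vanish,'' plus peeling off sub-presentations and quoting the $k$-step lemma and the length-$\leq 6$ results --- are not constructed, and there is no argument that the degenerate configurations form a tractable list or fall under the known small-length theorems. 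A proposal whose final paragraph concedes that essentially all the work (verifying (2)) remains to be done is a correct reduction to the weight test, but not a proof.

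The idea you are missing, and the entire content of the paper's proof, is a substitution that restructures the star graph before any weights are chosen: set $x=t^{-1}g_1^{-1}t$. Since $g_i=g_1^{-1}$, the cyclic word contains $t^{-1}g_1t=x^{-1}$ (across the join $g_nt^{-1}\cdot g_1t$) and $t^{-1}g_it=x$, so the equation becomes the two-relator relative presentation $\langle G, t, x \mid x^{-1}g_2t\cdots g_{i-1}\,x\,g_{i+1}t\cdots t g_n,\; t^{-1}g_1^{-1}tx^{-1}\rangle$, whose first relator is positive in $t$; the hypothesis $g_i=g_1^{-1}$ is thus not merely ``one admissible cycle to pay for,'' as in your reading, but exactly what lets a single new letter absorb both pinches $t^{-1}\cdot t$. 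On the resulting four-vertex star graph the paper takes the explicit weight function $\theta(g_1^{-1})=\theta(g_2)=\theta(g_n)=0$, weight $0$ on the label-$1$ edge $t^{-1}\leftrightarrow x^{-1}$, and weight $1$ on all other edges: condition (1) holds with equality for both relators, (3) is immediate, and every admissible cycle of weight less than $2$ forces a relation $g_1^{m}=1$ or $g_n^{m}=1$ with $m\neq 0$, impossible in a torsion-free group --- so (2) holds with no case analysis on coefficient coincidences at all (the $k$-step lemma of Section 2 is what legitimizes working with the enlarged presentation). Without this substitution, or some equivalent device, your direct approach stalls exactly where you predicted it would.
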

\begin{proof}

	Let $g_i =g_1^{-1}$. We substitute $x=t^{-1} g_1^{-1} t$ to get $$\mathcal{P}=\langle A, t~|~ x^{-1}g_2t \cdots g_{i-1} x g_{i+1} t \cdots t g_n =1=t^{-1}g_{1}^{-1}tx^{-1}  \rangle.$$  We use the weight test to show that $\mathcal{P}$ is aspherical.  The star graph $\Gamma$ for $\mathcal{P}$ is given by Figure  \ref{sgtype1}. We assign a weight function $\theta$ such that $\theta(g_1^{-1})=\theta(g_{2})=\theta(g_{n})=0$. The weight of the edge $t^{-1}   \leftrightarrow   x^{-1}$ with label $1$ is also zero. All other edges are assigned a weight $1$. Then $\Sigma (1-\theta(\alpha_{i}))=\Sigma(1-\theta(\beta_{j}))=2$, where $\alpha_{i}$ represents one of the solid edges and $\beta_{j}$ is a dotted edge. Hence (W1) is satisfied.  Now it is clear from the star graph that all admissible cycles of weight less than two has label $g_{n}^{m}=1$ or $g_{1}^{m}=1, \, (m \neq 0)$. Since $G$ is torsion free (W2) is satisfied. Moreover (W3) clearly holds.  Hence $\mathcal{P}$ is aspherical over a torsion free group.

\begin{figure}
	\begin{center}
			\vspace{-1.0in}
		\hspace*{-1.5in}
		\includegraphics[width=0.75\textwidth, height=0.75\textheight]{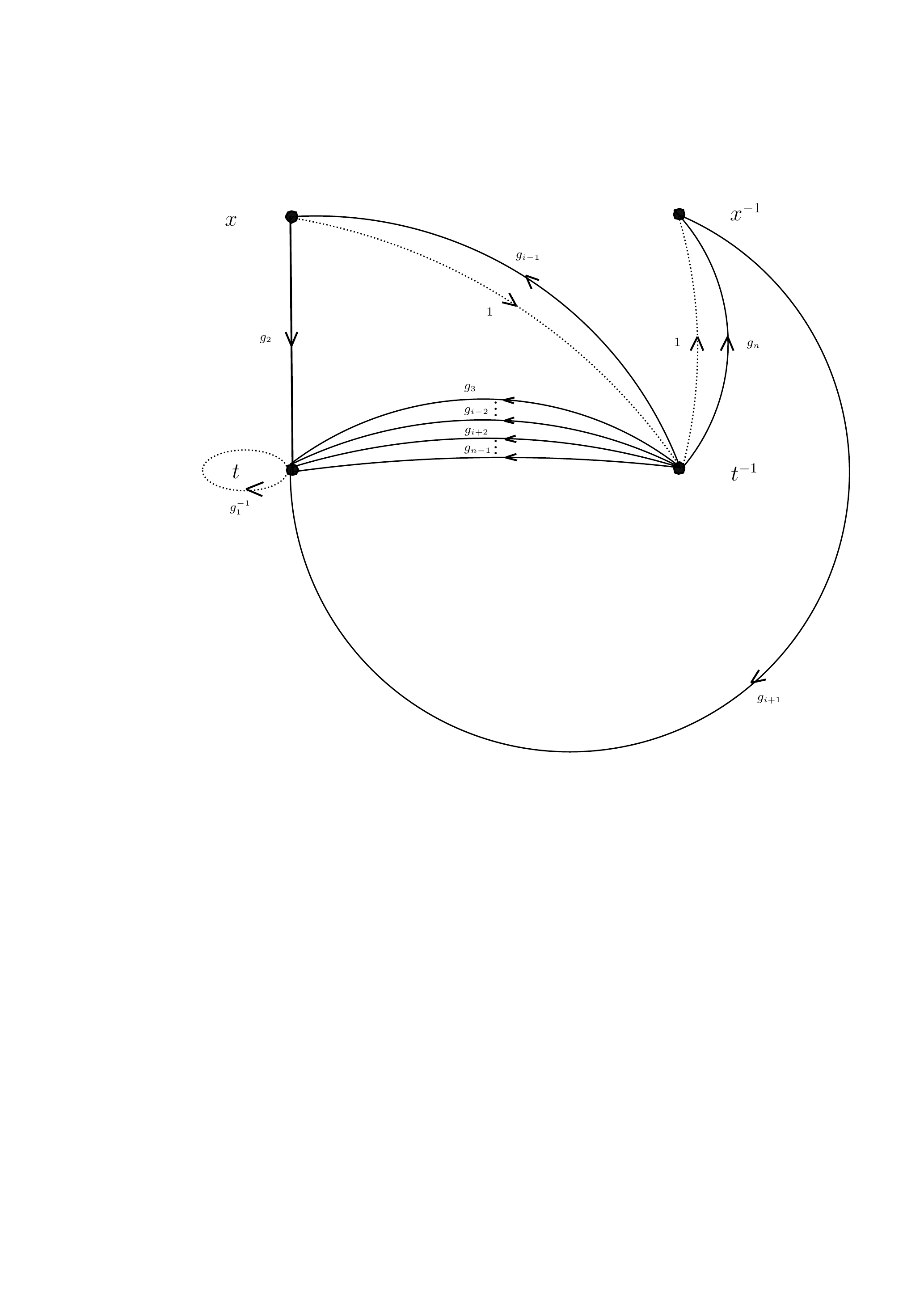}
		\vspace{-3.1in}
		\caption{Star graph $\Gamma$}
		\label{sgtype1}
	\end{center}
\end{figure}

\end{proof}

\begin{lem}\label{lsgtype4} Let $m_1, \cdots , m_n $ be positive integers such that $m_1 \geq m_{i-1}$. Then the equation $$g_1t^{m_1}g_2t^{m_2} \cdots g_{i-1}t^{-m_{i-1}}g_i t^{m_i} \cdots g_n t^{-m_n}=1$$ is solvable if $g_i =g_1^{-1}$.
\end{lem}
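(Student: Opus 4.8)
The plan is to mirror the proof of Lemma~\ref{lsgtype1}, replacing the single conjugating power by the two powers that straddle $g_i$. Setting $g_i=g_1^{-1}$, I would introduce the new generator
$$x = t^{-m_{i-1}} g_1^{-1} t^{m_i},$$
which is exactly the block of $s(t)$ surrounding $g_i$. This lets me replace the central subword $t^{-m_{i-1}} g_1^{-1} t^{m_i}$ by the single letter $x$, while the leading coefficient is rewritten through $g_1 = t^{m_i} x^{-1} t^{-m_{i-1}}$, so the front $g_1 t^{m_1}$ becomes $t^{m_i} x^{-1} t^{m_1-m_{i-1}}$. Here the hypothesis $m_1 \geq m_{i-1}$ enters: it guarantees that the exponent $m_1-m_{i-1}$ is nonnegative, so that no negative power of $t$ is produced next to $x^{-1}$ and the syllable structure stays of the expected form. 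When every $m_j=1$ this specialises verbatim to the substitution $x=t^{-1}g_1^{-1}t$ of Lemma~\ref{lsgtype1}.

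After the substitution and a cyclic permutation that absorbs the leading $t^{m_i}$ into the trailing $t^{-m_n}$, I obtain a two-relator relative presentation $\mathcal{P}=\langle G, t, x \mid R_1, R_2 \rangle$ over $G$ with
$$R_2 = t^{-m_{i-1}} g_1^{-1} t^{m_i} x^{-1}, \qquad R_1 = x^{-1} t^{m_1-m_{i-1}} g_2 t^{m_2} \cdots g_{i-1}\, x\, g_{i+1} t^{m_{i+1}} \cdots g_n t^{m_i-m_n}.$$
Since $\mathcal{P}$ is obtained from $\langle G,t\mid s(t)\rangle$ by a Tietze transformation introducing $x$, asphericity of $\mathcal{P}$ yields solvability exactly as in Lemma~\ref{lsgtype1}. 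I would then form the star graph $\Gamma$ on the vertices $t, t^{-1}, x, x^{-1}$ and assign, as before, weight $0$ to the three coefficient corners carrying $g_1^{-1}$, $g_2$, $g_n$ and to the single label-$1$ corner $t^{-1}\leftrightarrow x^{-1}$, and weight $1$ to every remaining edge.

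Checking the asphericity conditions then proceeds as in the length-one case. Condition (W1) is a count: each of $R_1$ and $R_2$ contains exactly two weight-$0$ corners, so $\sum(1-\theta)=2$ for both relators; (W3) is immediate. The substance is (W2): I would argue that every admissible cycle of weight less than $2$ must traverse the weight-$0$ corner labelled $g_1^{-1}$ (a loop at $t$) or the corresponding $g_n$-corner, and hence carries a label $g_1^{m}$ or $g_n^{m}$ with $m\neq 0$. As $g_1=g_i^{-1}$ and $g_n$ both sit between powers of opposite sign they are nontrivial, so torsion-freeness of $G$ forces $g_1^{m},g_n^{m}\neq 1$, and no such cycle is admissible.

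The main obstacle, and the genuine novelty relative to Lemma~\ref{lsgtype1}, is the bookkeeping forced by the higher powers: each run $t^{m_j}$ contributes $m_j-1$ internal corners of type $t^{-1}\leftrightarrow t$ with trivial label, so $\Gamma$ now carries many label-$1$ edges that are absent when all exponents are $\pm1$. These must all be kept at weight $1$, and the delicate step is to confirm that no short admissible cycle can be assembled out of them together with the four weight-$0$ edges, i.e.\ that the only low-weight admissible cycles remain the torsion cycles at the $g_1$- and $g_n$-corners. Once this local analysis of $\Gamma$ is settled, asphericity of $\mathcal{P}$, and hence solvability of $s(t)=1$, follows.
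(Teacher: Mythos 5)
Your substitution $x=t^{-m_{i-1}}g_1^{-1}t^{m_i}$, the resulting two-relator presentation, and the overall weight-test framework coincide exactly with the paper's proof. The gap is in the next step: you carry over the \emph{single} weight function of Lemma~\ref{lsgtype1} (zero on the corners labelled $g_1^{-1}$, $g_2$, $g_n$ and on the $1$-labelled edge $t^{-1}\leftrightarrow x^{-1}$) and assert it works uniformly. It does not, because the higher exponents change which vertices of $\Gamma$ the coefficient corners attach to, and this geometry depends on the sign of $m_i-m_n$ and on whether $m_1=m_{i-1}$. Concretely, when $m_i>m_n$ the relator $R_1$ ends in $g_n t^{\,m_i-m_n}$ with $m_i-m_n>0$, so the $g_n$-corner has the form $t\,g_n\,t$ and gives an edge joining $t^{-1}$ and $t$ --- parallel to the weight-$0$ edge labelled $g_2$ (which is also of type $t\,g_2\,t$ once $m_1>m_{i-1}$). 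Traversing one and returning along the other is a cyclically reduced cycle of weight $0$ with label $g_2g_n^{-1}$, which is admissible whenever $g_2=g_n$; nothing in the hypothesis $g_i=g_1^{-1}$ excludes this, so (W2) fails. Likewise, when $m_i=m_n$ the $g_n$-edge joins $t^{-1}$ and $x^{-1}$, parallel to your zeroed $1$-labelled edge, and combining these with the $g_2$-edge and the $g_1^{-1}$-loop produces a weight-$0$ admissible cycle with label of the form $g_ng_2g_1^{-1}g_2^{-1}$, trivial when $g_n=g_2g_1g_2^{-1}$. So your key claim --- that every admissible cycle of weight less than $2$ carries a label $g_1^{m}$ or $g_n^{m}$ --- is false for your fixed $\theta$; cycles can mix distinct zero-weight edges and carry mixed labels.

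This is exactly why the paper's proof splits into cases according to the order relations among $m_i,m_n$ and $m_1,m_{i-1}$, choosing a \emph{different} weight function in each: for $m_i>m_n$ and for $m_i=m_n$ (with $m_1>m_{i-1}$) it zeroes $g_1^{-1}$, $g_{i-1}$, $g_{i+1}$ --- the coefficients adjacent to the occurrences of $x$, whose edges run between the $t$- and $x$-vertices and cannot be concatenated into parallel pairs --- together with a $1$-labelled $x\leftrightarrow t^{-1}$ edge; for $m_i<m_n$ it zeroes $g_1^{-1}$ and $g_n$ (now a loop at $t^{-1}$, safe by torsion-freeness) together with two $1$-labelled edges. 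Your (W1) bookkeeping also slips in the case $m_i>m_n$, where $R_1$ acquires a third zero-weight corner (the cyclic $1$-labelled corner between the trailing $t$ and $x^{-1}$); that still satisfies the inequality in (W1), but it enlarges the set of zero-weight edges and is part of what breaks (W2). To repair the proof you must either perform the paper's case analysis with case-adapted weight functions, or add hypotheses ruling out the coincidences ($g_2=g_n$, $g_n=g_2g_1g_2^{-1}$, etc.) that your single weight function cannot see.
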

\begin{proof}
	
	Let $$\mathcal{P}=\langle A, t~|~ g_1t^{m_1}g_2t^{m_2} \cdots g_{i-1}t^{-m_{i-1}}g_1^{-1} t^{m_i} \cdots g_n t^{-m_n}  \rangle$$ be the relative presentation corresponding to the given equation. Following  \cite{BP}, it is sufficient to show that the presentation $\mathcal{P}$ is aspherical. Substitute $x=t^{-m_{i-1}} g_1^{-1} t^{m_i}$ to get $$\mathcal{P}=\langle A, t~|~ t^{m_i - m_n} x^{-1} t^{m_1 - m_{i-1}}g_2t^{m_2} \cdots g_{i-1} x g_{i+1} t^{m_{i+1}} \cdots t^{m_{n-1}} g_n=1  =t^{-m_{i-1}}g_{1}^{-1}t^{m_{i}}x^{-1}  \rangle.$$ We use the weight test to show that $\mathcal{P}$ is aspherical. The proof is given in separate cases.
	
	\begin{enumerate}
		\item Let $m_i > m_n $ and $m_1 > m_{i-1}$. The star graph $\Gamma$ for $\mathcal{P}$ is given by Figure \ref{sgtype4_1_3}.
		
		\begin{figure}
			\begin{center}
				\vspace{-1.0in}
				\hspace*{-1.5in}
				\includegraphics[width=0.75\textwidth, height=0.75\textheight]{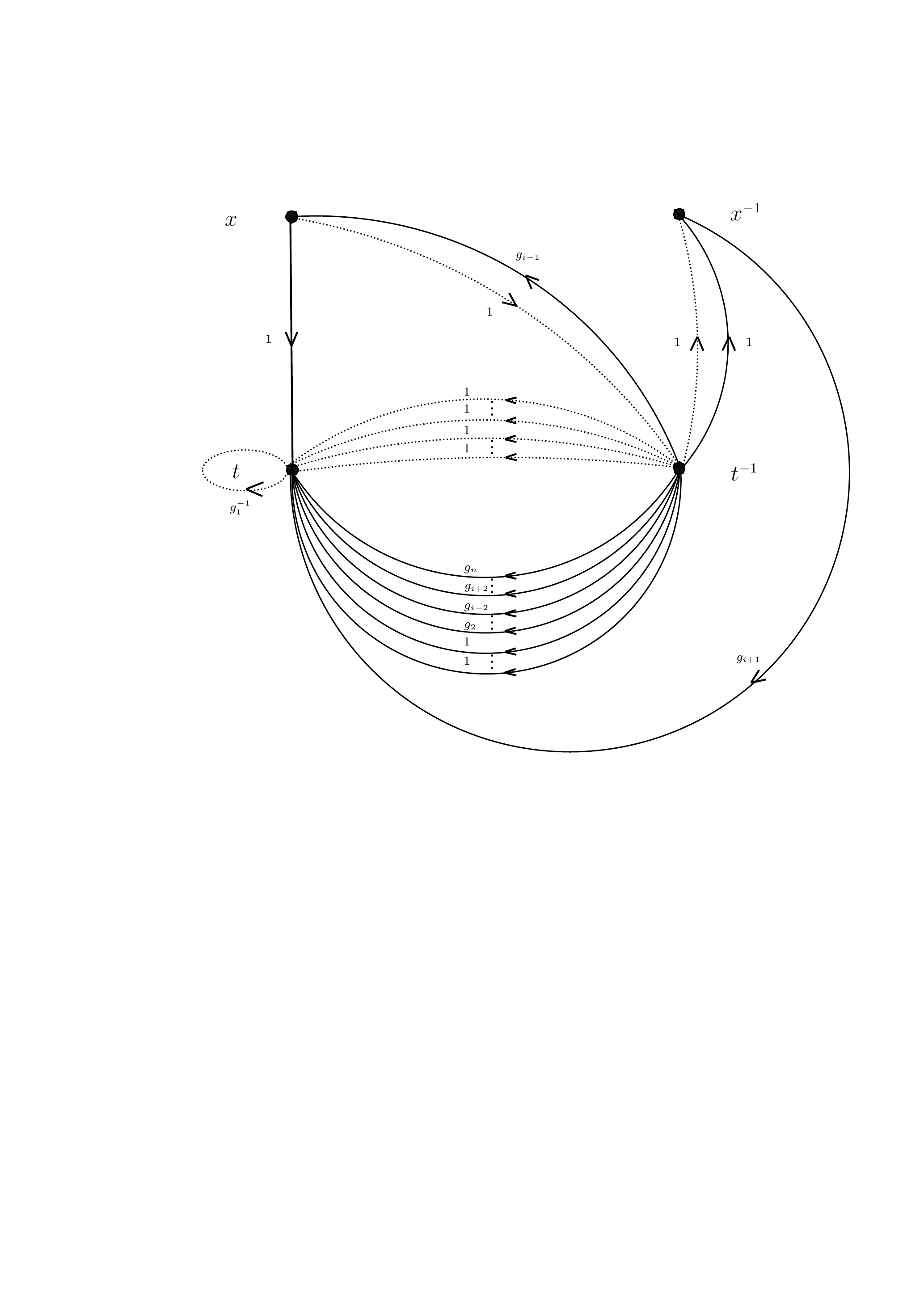}
				\vspace{-3.1in}
				\caption{Star graph $\Gamma$}
				\label{sgtype4_1_3}
				
			\end{center}
		\end{figure} 
		
%

	 We assign a weight function $\theta$ such that $\theta(g_1^{-1})=\theta(g_{i+1})=\theta(g_{i-1})=0$. Moreover the weight of the edge $x   \leftrightarrow   t^{-1}$ with label $1$ is also zero. All other edges are assigned a weight $1$. It is clear from the star graph that all admissible cycles of weight less than two imply that the group is a torsion group. Hence $\mathcal{P}$ is aspherical over a torsion free group.
	 
	 \item Let $m_i < m_n $ and $m_1 > m_{i-1}$. The star graph $\Gamma$ for $\mathcal{P}$ is given by Figure \ref{sgtype4_1_2}.
	 
	 	\begin{figure}
	 	\begin{center}
	 		\vspace{-1.0in}
	 		\hspace*{-1.5in}
	 		\includegraphics[width=0.75\textwidth, height=0.75\textheight]{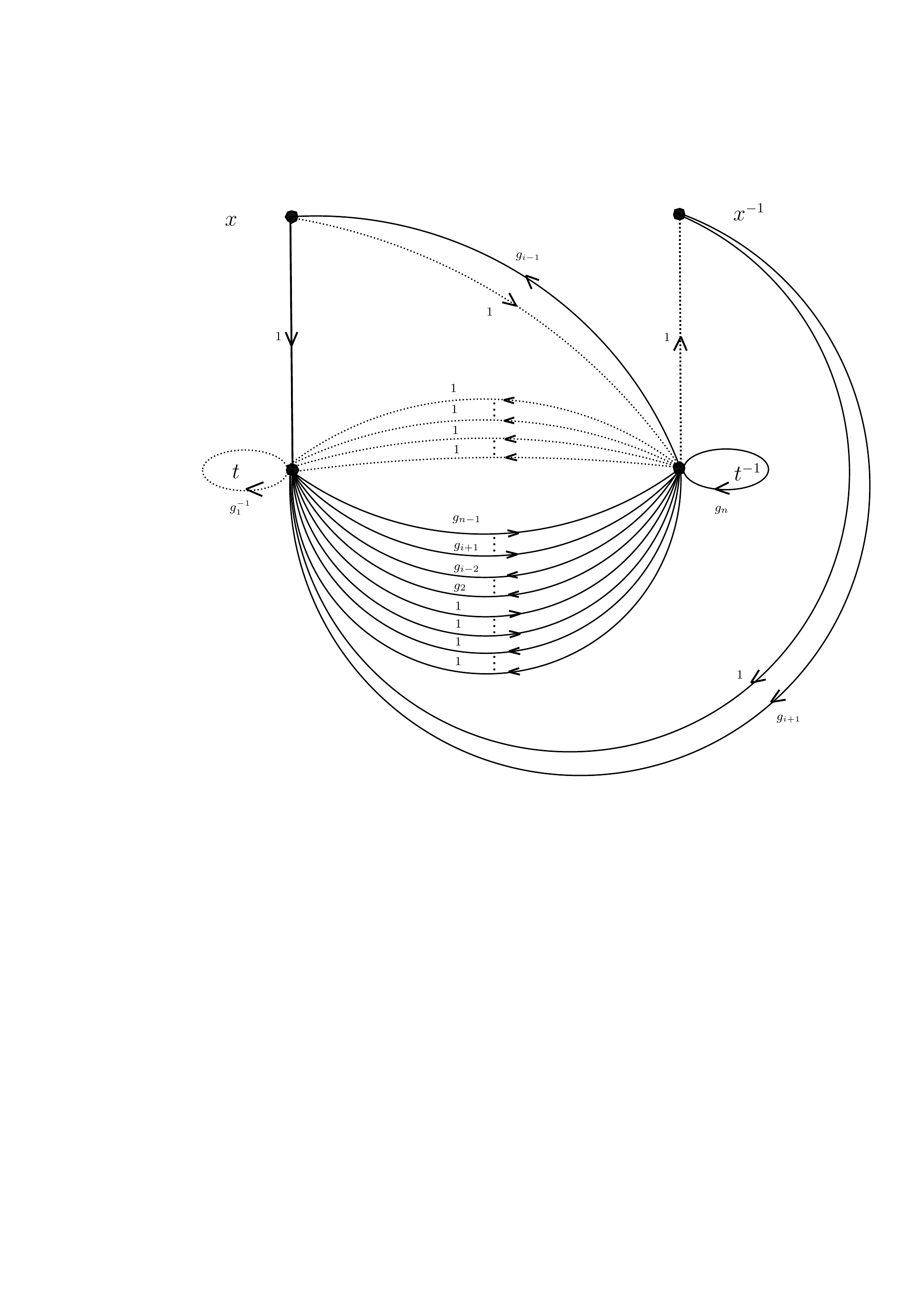}
	 		\vspace{-3.1in}
	 		\caption{Star graph $\Gamma$}
	 		\label{sgtype4_1_2}
	 		
	 	\end{center}
	 \end{figure} 
	 
%

	 We assign a weight function $\theta$ such that $\theta(g_1^{-1})=\theta(g_{n})=0$. Moreover the weight of the edges $x   \leftrightarrow   t$ and $t^{-1}   \leftrightarrow   x^{-1}$ with label $1$ is also zero. All other edges are assigned a weight $1$. It is clear from the star graph that all admissible cycles of weight less than two imply that the group is a torsion group. Hence $\mathcal{P}$ is aspherical.
	 
	 \item Let $m_i = m_n $ and $m_1 > m_{i-1}$. The star graph $\Gamma$ for $\mathcal{P}$ is given by Figure \ref{sgtype4_1_1_1}.
	 
	 \begin{figure}
	 	\begin{center}
	 		\vspace{-1.0in}
	 		\hspace*{-1.5in}
	 		\includegraphics[width=0.75\textwidth, height=0.75\textheight]{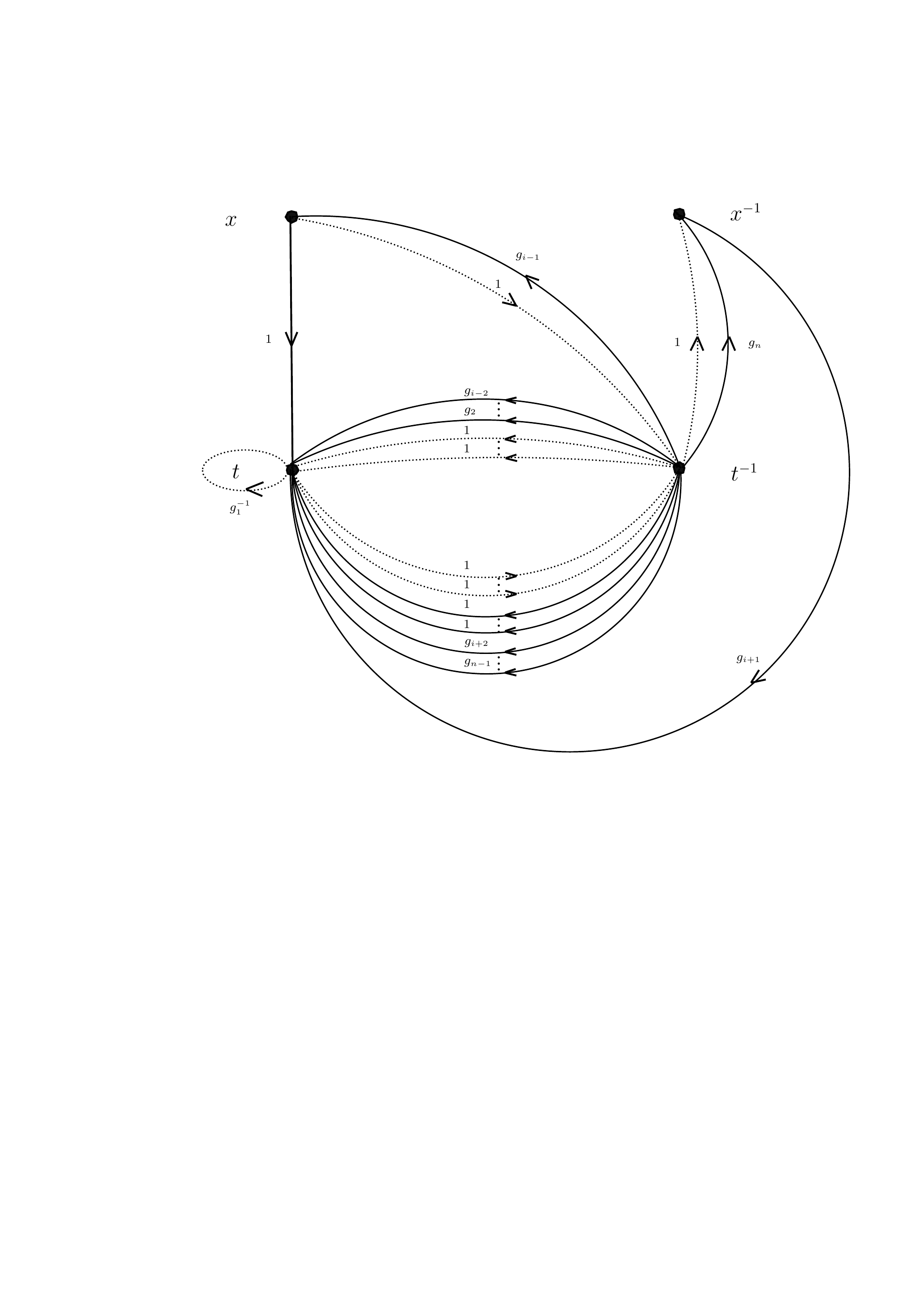}
	 		\vspace{-3.1in}
	 		\caption{Star graph $\Gamma$}
	 		\label{sgtype4_1_1_1}
	 		
	 	\end{center}
	 \end{figure}

%
	 
	 We assign a weight function $\theta$ such that $\theta(g_1^{-1})=\theta(g_{i+1})=\theta(g_{i-1})=0$. Moreover the weight of the edge $x   \leftrightarrow   t^{-1}$ with label $1$ is also zero. All other edges are assigned a weight $1$. It is clear from the star graph that $\mathcal{P}$ is aspherical for a torsion free group.
	 
	 Similarly we can prove the result for cases $m_i > m_n, \; m_1 = m_{i-1}$ and $m_i < m_n, \; m_1 = m_{i-1}$.
		 
	\end{enumerate}
	\end{proof}

\begin{lem}\label{lsgtype2} The equation $g_1tg_2t \cdots g_{i-1}t^{-1}g_i t g_{i+1} t \cdots g_{j-1}t^{-1}g_j t g_{j+1} t \cdots t g_n t^{-1}=1$ is solvable if $g_i=g_j =g_1^{-1}$.
\end{lem}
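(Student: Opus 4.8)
The plan is to treat this equation as the three--negative--power analogue of Lemma \ref{lsgtype1} and to reuse the same substitution. Setting $g_i=g_j=g_1^{-1}$, I would put $x=t^{-1}g_1^{-1}t$. The point is that all three negative powers of $t$ are then absorbed: the two interior blocks $t^{-1}g_it$ and $t^{-1}g_jt$ both read $t^{-1}g_1^{-1}t=x$, while the cyclic block formed by the trailing $t^{-1}$ wrapping around to the leading $g_1t$ reads $t^{-1}g_1t=x^{-1}$. Rewriting the equation accordingly turns it into the two--relator relative presentation
\[
\mathcal{P}=\langle A,\,t \mid x^{-1}g_2t\cdots g_{i-1}\,x\,g_{i+1}t\cdots g_{j-1}\,x\,g_{j+1}t\cdots g_n \;=\;1\;=\;t^{-1}g_1^{-1}t\,x^{-1}\rangle,
\]
where $A=G*\langle x\rangle$ is again torsion free. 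As in Lemma \ref{lsgtype1}, it then suffices to show that $\mathcal{P}$ is aspherical, which I would establish with the weight test, the two--relator reduction being legitimate by the $k$--step lemma.

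Next I would draw the star graph $\Gamma$ of $\mathcal{P}$ on the vertices $t,t^{-1}$; it is the graph of Figure \ref{sgtype1} enlarged by the edges produced by the \emph{second} occurrence of $x$. The weight function $\theta$ would be chosen in direct analogy with Lemma \ref{lsgtype1}: assign weight $0$ to the edges carrying the coefficients $g_1^{-1}$, $g_2$ and $g_n$ together with the label--$1$ edge $t^{-1}\leftrightarrow x^{-1}$, and weight $1$ to every remaining edge (adjusting, if the figure forces it, the edges adjacent to the two $x$'s exactly as the analogous edges are treated in Lemma \ref{lsgtype4}). With this choice the corner sum $\sum(1-\theta)$ for each of the two relators equals $2$, so (W1) holds, and (W3) is immediate because no negative weight is introduced.

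The decisive step is (W2), and it is also where the main obstacle lies. I would argue that every admissible cycle in $\Gamma$ of weight strictly less than $2$ carries a label that is a non--trivial power of a single coefficient, of the shape $g_1^{m}=1$ or $g_n^{m}=1$ with $m\neq 0$; since $G$, and hence $A$, is torsion free, no such cycle can be admissible. The difficulty compared with Lemma \ref{lsgtype1} is precisely that the second $x$ creates genuinely new short closed paths in the star graph, so the catalogue of low--weight cycles is larger and must be traced carefully to confirm that none of them acquires a trivial label, and that none mixes two distinct coefficients (which would escape the torsion argument). Once this finite case check is completed, $\theta$ is an aspherical weight function, $\mathcal{P}$ is aspherical, and therefore the equation is solvable over $G$.
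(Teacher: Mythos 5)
Your proposal follows the paper's own proof essentially verbatim: the same substitution $x=t^{-1}g_1^{-1}t$ (absorbing both interior negative blocks as $x$ and the cyclic trailing block as $x^{-1}$), the same two-relator relative presentation, the same weight function ($\theta(g_1^{-1})=\theta(g_2)=\theta(g_n)=0$ together with the zero-weight label-$1$ edge $t^{-1}\leftrightarrow x^{-1}$, all other edges weight $1$), and the same torsion-freeness argument for condition (W2). The only difference is cosmetic: the paper asserts the (W2) cycle check directly from its star graph figure, while you explicitly flag the new short cycles created by the second occurrence of $x$ as the point requiring careful verification --- a check the paper itself leaves implicit.
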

\begin{proof}

	Let $$\mathcal{P}=\langle A, t~|~ g_1tg_2t \cdots g_{i-1}t^{-1}g_i t g_{i+1} t \cdots g_{j-1}t^{-1}g_j t g_{j+1} t \cdots t g_n t^{-1}  \rangle$$ be the relative presentation corresponding to the given equation. We will use the weight test to show that the presentation $\mathcal{P}$ is aspherical. Substitute $x=t^{-1} g_1^{-1} t$ to get $$\mathcal{P}=\langle A, t~|~ x^{-1}g_2t \cdots g_{i-1} x g_{i+1} t \cdots g_{j-1} x g_{j+1} t \cdots t g_n =1 =t^{-1}g_{1}^{-1}tx^{-1}  \rangle.$$  The star graph $\Gamma$ for $\mathcal{P}$ is given by Figure \ref{addsgtype2}.

	\begin{figure}
		\begin{center}
			\vspace{-1.0in}
			\hspace*{-1.5in}
			\includegraphics[width=0.75\textwidth, height=0.75\textheight]{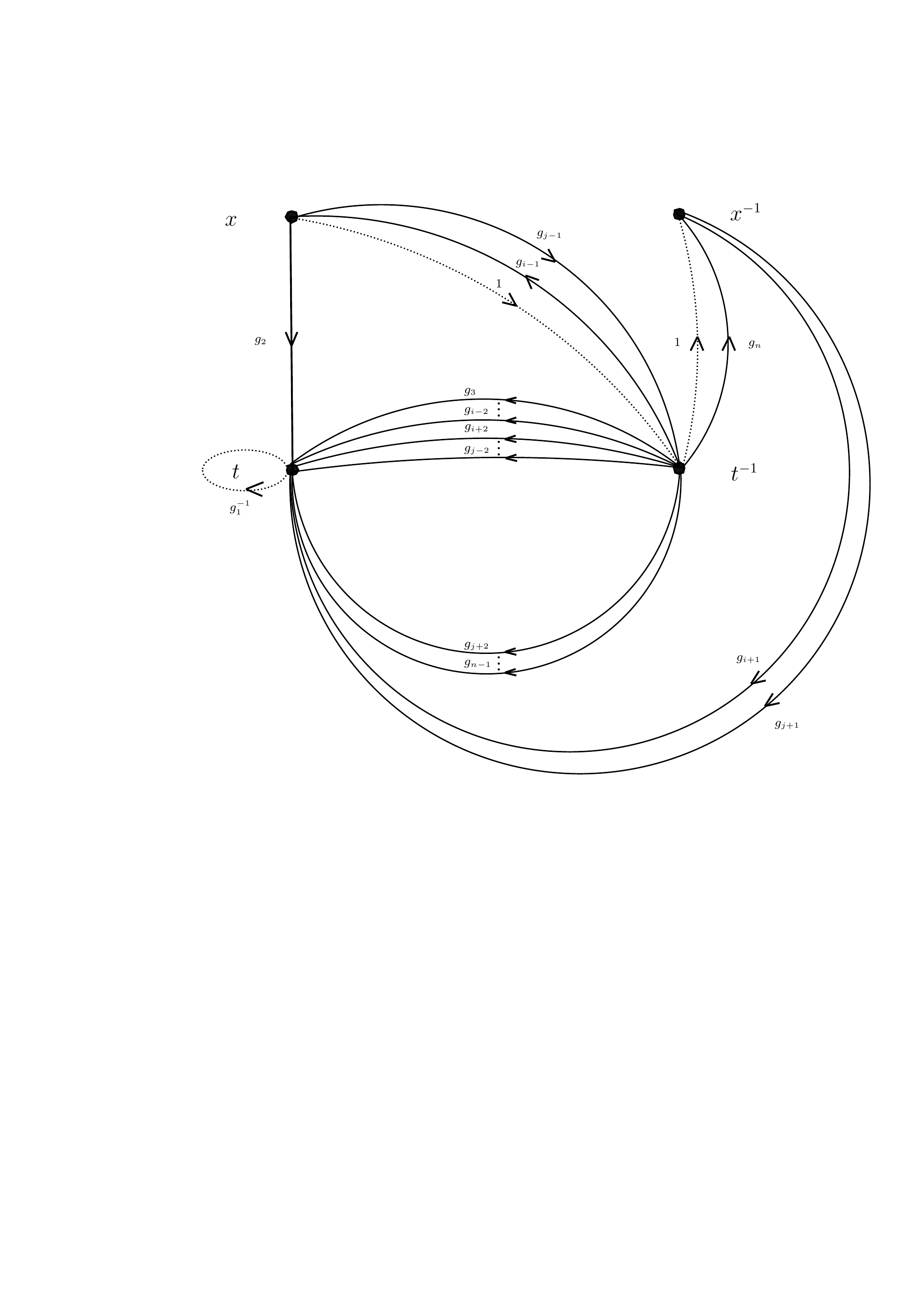}
			\vspace{-3.1in}
			\caption{Star graph $\Gamma$}
			\label{addsgtype2}
			
		\end{center}
	\end{figure}


	 We assign a weight function $\theta$ such that $\theta(g_1^{-1})=\theta(g_{2})=\theta(g_{n})=\theta(1)=0$, where $1$ is the label for the edge $t^{-1}   \leftrightarrow   x^{-1}$. All other edges are assigned a weight $1$. It is clear from the star graph that all admissible cycles of weight less than two imply that the group is a torsion group. Hence $\mathcal{P}$ is aspherical over a torsion free group.
	
\end{proof}

\begin{lem}\label{lsgtype5} Let $m_1, \cdots , m_n $ be positive integers. Then the equation $$g_1t^{m_1}g_2t^{m_2} \cdots g_{i-1}t^{-m_{i-1}}g_i t^{m_i} g_{i+1} \cdots g_{j-1}t^{-m_{j-1}}g_j t^{m_j} g_{j+1} \cdots g_n t^{-m_n}=1$$ is solvable if $g_i = g_j =g_1^{-1}$ and any one of the following holds.
	\begin{enumerate}
		\item $m_i > m_n $, $m_1 > m_{i-1}$, $m_{i-1} < m_{j-1} $ and $m_{j} > m_{i}$
		\item $m_i = m_n $ and $m_1 > m_{i-1}$,  $m_{i-1} > m_{j-1} $ and $m_{j} > m_{i}$
	\end{enumerate}
\end{lem}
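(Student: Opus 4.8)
The plan is to mimic the proofs of Lemmas~\ref{lsgtype4} and~\ref{lsgtype2}: pass to the relative presentation, introduce one auxiliary generator by a substitution that absorbs the first negative block, rewrite the remaining data in terms of it, and then prove that the resulting two-relator presentation is aspherical by the weight test of~\cite{BP}; asphericity yields solvability. Putting $g_i=g_j=g_1^{-1}$, I start from $\mathcal{P}=\langle A,t \mid s(t)\rangle$ and substitute $x=t^{-m_{i-1}}g_1^{-1}t^{m_i}$ as in Lemma~\ref{lsgtype4}, so that the block at position~$i$ collapses to $x$. For the block at position~$j$ I use $g_j=g_1^{-1}=t^{m_{i-1}}xt^{-m_i}$ to rewrite $t^{-m_{j-1}}g_jt^{m_j}=t^{m_{i-1}-m_{j-1}}\,x\,t^{m_j-m_i}$, and I eliminate the leading coefficient via $g_1=t^{m_i}x^{-1}t^{-m_{i-1}}$. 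Moving the trailing $t^{-m_n}$ cyclically to the front then gives the equivalent presentation $\langle A,t \mid r_1=1=r_2\rangle$, where
\[
r_1=t^{m_i-m_n}x^{-1}t^{m_1-m_{i-1}}g_2t^{m_2}\cdots g_{i-1}\,x\,g_{i+1}t^{m_{i+1}}\cdots g_{j-1}\,t^{m_{i-1}-m_{j-1}}x\,t^{m_j-m_i}g_{j+1}\cdots g_n
\]
and $r_2=t^{-m_{i-1}}g_1^{-1}t^{m_i}x^{-1}$. By~\cite{BP} it then suffices to show that this presentation is aspherical.

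Next I would split into the two cases prescribed by the hypotheses, because the four junction exponents $m_i-m_n$, $m_1-m_{i-1}$, $m_{i-1}-m_{j-1}$ and $m_j-m_i$ of $r_1$ decide, through $\mathfrak{i}$ and $\tau$, which of the vertices $t,t^{-1}$ the associated edges of the star graph $\Gamma$ meet, and hence the shape of $\Gamma$. Under~(1) all four are nonzero, with $m_{i-1}-m_{j-1}<0$ and the rest positive; under~(2) the first vanishes, so $t^{m_i-m_n}=1$ and $\Gamma$ degenerates just as in the $m_i=m_n$ case of Lemma~\ref{lsgtype4}, the remaining three exponents being positive. In each case I would draw $\Gamma$ and assign the weight function $\theta$ that mirrors Lemmas~\ref{lsgtype2} and~\ref{lsgtype4}: $\theta=0$ on the edges labelled $g_1^{-1}$, on the label-$1$ edge $t^{-1}\leftrightarrow x^{-1}$ arising from $r_2$, and on the two boundary edges labelled $g_2$ and $g_n$ (with the analogous choice in case~(2)), and $\theta=1$ on every other edge.

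It would then remain to verify (W1)--(W3). Condition (W3) is immediate since every weight is $0$ or $1$, and for (W1) the zero-weight edges are placed so that each of $r_1$ and $r_2$ collects $\sum(1-\theta)=2$, which is routine corner-by-corner bookkeeping around the two relators. The hard part will be (W2): I must show that every admissible cycle of weight less than $2$ has label a nontrivial power $g^m$ ($m\neq0$) of $g_1$ or of $g_n$, so that torsion-freeness of $G$ rules it out. As in the earlier lemmas, reading the labels around the short cycles of $\Gamma$ should reduce these to powers of $g_1^{-1}$ and of $g_n$. The genuinely new difficulty, and exactly where hypotheses~(1) and~(2) enter, is the second block at position~$j$: its two fresh corners carry the exponents $m_{i-1}-m_{j-1}$ and $m_j-m_i$, and the inequalities $m_{i-1}<m_{j-1}$ (respectively $m_{i-1}>m_{j-1}$) together with $m_j>m_i$ are precisely what keep these corners at weight~$1$, so that the block cannot create a new short admissible cycle of weight less than~$2$ carrying a nontrivial label. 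Once (W2) is secured, $\theta$ is aspherical, $\mathcal{P}$ is aspherical over the torsion-free group~$G$, and $s(t)=1$ is solvable.
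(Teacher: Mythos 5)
Your skeleton coincides with the paper's: the same substitution $x=t^{-m_{i-1}}g_1^{-1}t^{m_i}$, the same rewritten relators $r_1$ and $r_2$, and the same split into the two cases according to the signs of the four junction exponents. The gap is in the weight function, and it is a genuine one. You transplant the zero set of Lemmas \ref{lsgtype1} and \ref{lsgtype2} (zero weight on $g_1^{-1}$, on the label-$1$ edge $t^{-1}\leftrightarrow x^{-1}$, and on the edges labelled $g_2$ and $g_n$), but the geometry of the star graph has changed: since $m_1>m_{i-1}$, a nonempty positive power $t^{m_1-m_{i-1}}$ now sits between $x^{-1}$ and $g_2$, so the edge labelled $g_2$ joins $t^{-1}$ and $t$ (in Lemma \ref{lsgtype1} it joined $x$ and $t$, which is why zeroing it was harmless there); and in case (1), since $m_i>m_n$, the coefficient $g_n$ also lies between two positive powers, so its edge likewise joins $t^{-1}$ and $t$. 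With both of these parallel edges at weight $0$ you get a cycle of weight $0$: traverse the $g_2$-edge from $t^{-1}$ to $t$ and return along the $g_n$-edge, with label $g_2g_n^{-1}$. Nothing in the hypotheses of the lemma prevents $g_2=g_n$ (nor $g_2=g_k$ for an interior coefficient, which similarly yields weight-$1$ admissible cycles with label $g_2g_k^{-1}$), so (W2) fails: these labels are not powers of a single nontrivial element, and torsion-freeness gives no contradiction. The ``routine corner-by-corner bookkeeping'' and the deferred verification of (W2) are exactly where the content of the proof lies, and with your $\theta$ that verification cannot succeed.

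The paper's proof makes a different, more careful choice, zeroing only edges whose labels are forced nontrivial by the sign-change condition and whose endpoints keep the zero-weight subgraph from connecting two unconstrained coefficients. In case (1) it takes $\theta(g_1^{-1})=\theta(g_{j-1})=0$ together with the label-$1$ edges $t\leftrightarrow x$ and $t^{-1}\leftrightarrow x^{-1}$: here $g_1^{-1}$ is a loop at $t$ (from $r_2$), and $g_{j-1}$ is a loop at $t^{-1}$, because in case (1) it is followed by the negative power $t^{m_{i-1}-m_{j-1}}$; both $g_1$ and $g_{j-1}$ are nontrivial (each sits at a sign change in the original equation), so every admissible cycle of weight less than $2$ carries a label that is a nonzero power of one of them. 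In case (2) the paper takes $\theta(g_1^{-1})=\theta(g_n)=0$ plus the label-$1$ edges $x\leftrightarrow t$ and $t^{-1}\leftrightarrow x^{-1}$; zeroing $g_n$ is safe precisely because $m_i=m_n$ places its edge between $t^{-1}$ and $x^{-1}$, parallel to a zero label-$1$ edge, so short cycles have labels $g_n^m$ only. So your instinct in case (2) is essentially right, but in case (1) the pair $g_2,g_n$ must be replaced by the single loop $g_{j-1}$ (with the extra zero label-$1$ edge $t\leftrightarrow x$ arising from the corner $t^{-1}x$); as written, your weight function violates (W2) and the argument does not go through.
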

\begin{proof}
	
	Let $$\mathcal{P}=\langle A, t~|~ g_1t^{m_1}g_2t^{m_2} \cdots g_{i-1}t^{-m_{i-1}}g_i t^{m_i} g_{i+1} \cdots g_{j-1}t^{-m_{j-1}}g_j t^{m_j} g_{j+1} \cdots g_n t^{-m_n}  \rangle$$ be the relative presentation corresponding to the given equation. We will show that $\mathcal{P}$ is aspherical by using the weight test. Substitute $x=t^{-m_{i-1}} g_1^{-1} t^{m_i}$ to get $\mathcal{P}=\langle A, t~|~ t^{m_i - m_n} x^{-1} t^{m_1 - m_{i-1}}g_2t^{m_2} \; \; \cdots t^{m_{i-2}}g_{i-1} x g_{i+1} t^{m_{i+1}} \cdots \\   t^{m_{j-2}}g_{j-1} t^{m_{i-1}-m_{j-1}} x t^{m_{j}-m_{i}}  g_{j+1} \cdots t^{m_{n-1}} g_n=1  =t^{-m_{i-1}}g_{1}^{-1}t^{m_{i}}x^{-1}  \rangle.$ The proof is given in separate cases.
	
	\begin{enumerate}
		\item Let $m_i > m_n $, $m_1 > m_{i-1}$, $m_{i-1} < m_{j-1} $ and $m_{j} > m_{i}$. The star graph $\Gamma$ for $\mathcal{P}$ is given by Figure \ref{sgtype4_2}.
		
			\begin{figure}
			\begin{center}
				\vspace{-1.0in}
				\hspace*{-1.5in}
				\includegraphics[width=0.75\textwidth, height=0.75\textheight]{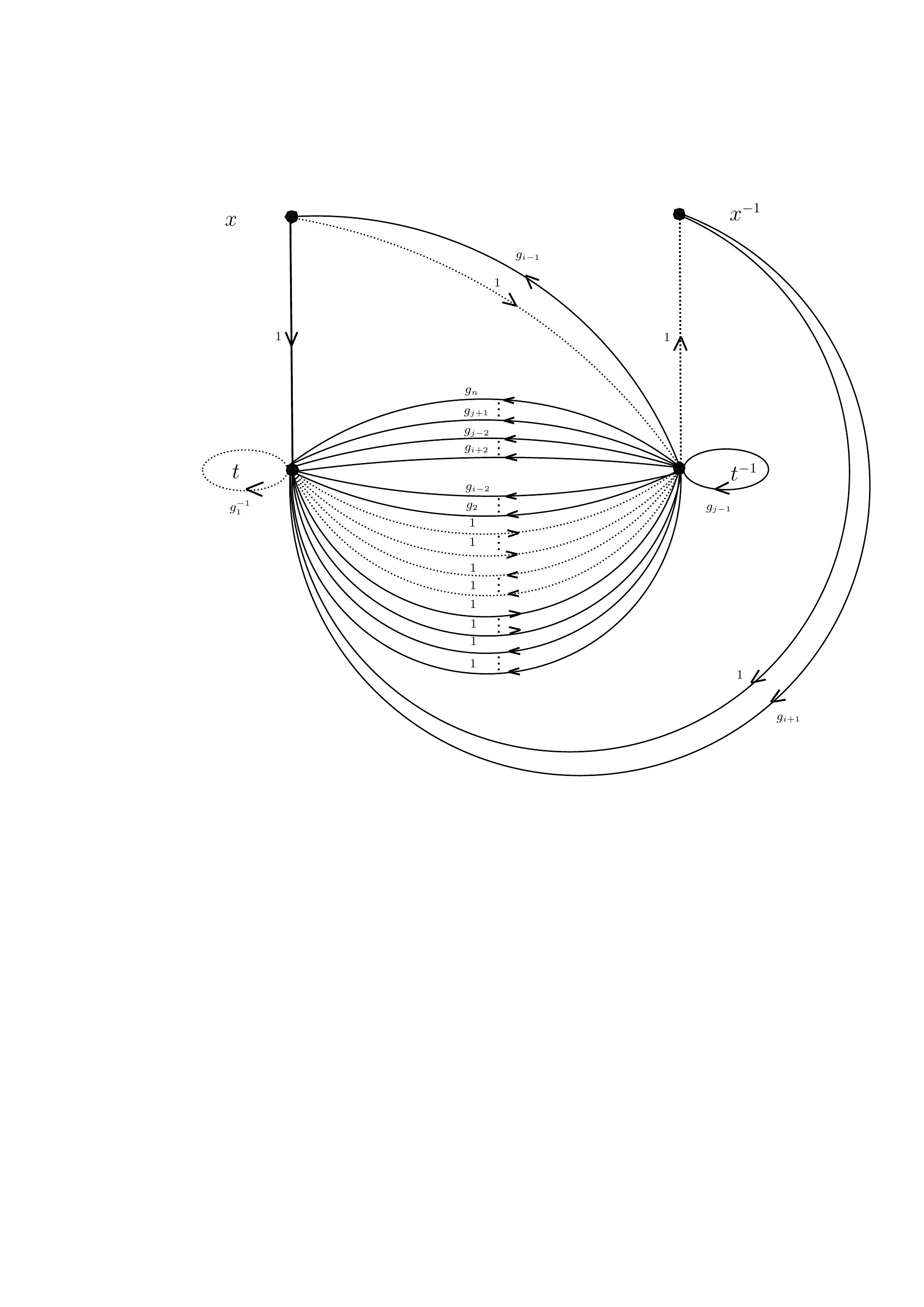}
				\vspace{-3.1in}
				\caption{Star graph $\Gamma$}
				\label{sgtype4_2}
				
			\end{center}
		\end{figure}

		
			We assign a weight function $\theta$ such that $\theta(g_1^{-1})=\theta(g_{j-1})=0$. Moreover the weight of the edges $t   \leftrightarrow   x$  and $t^{-1}  \leftrightarrow   x^{-1}$ with label $1$ is also zero. All other edges are assigned a weight $1$. It is clear from the star graph that all admissible cycles of weight less than two imply that the group is a torsion group. Hence $\mathcal{P}$ is aspherical over a torsion free group.
		
		\item Let $m_i = m_n $ and $m_1 > m_{i-1}$,  $m_{i-1} > m_{j-1} $ and $m_{j} > m_{i}$. The star graph $\Gamma$ for $\mathcal{P}$ is given by Figure \ref{sgtype4}.

			\begin{figure}
			\begin{center}
				\vspace{-1.0in}
				\hspace*{-1.5in}
				\includegraphics[width=0.75\textwidth, height=0.75\textheight]{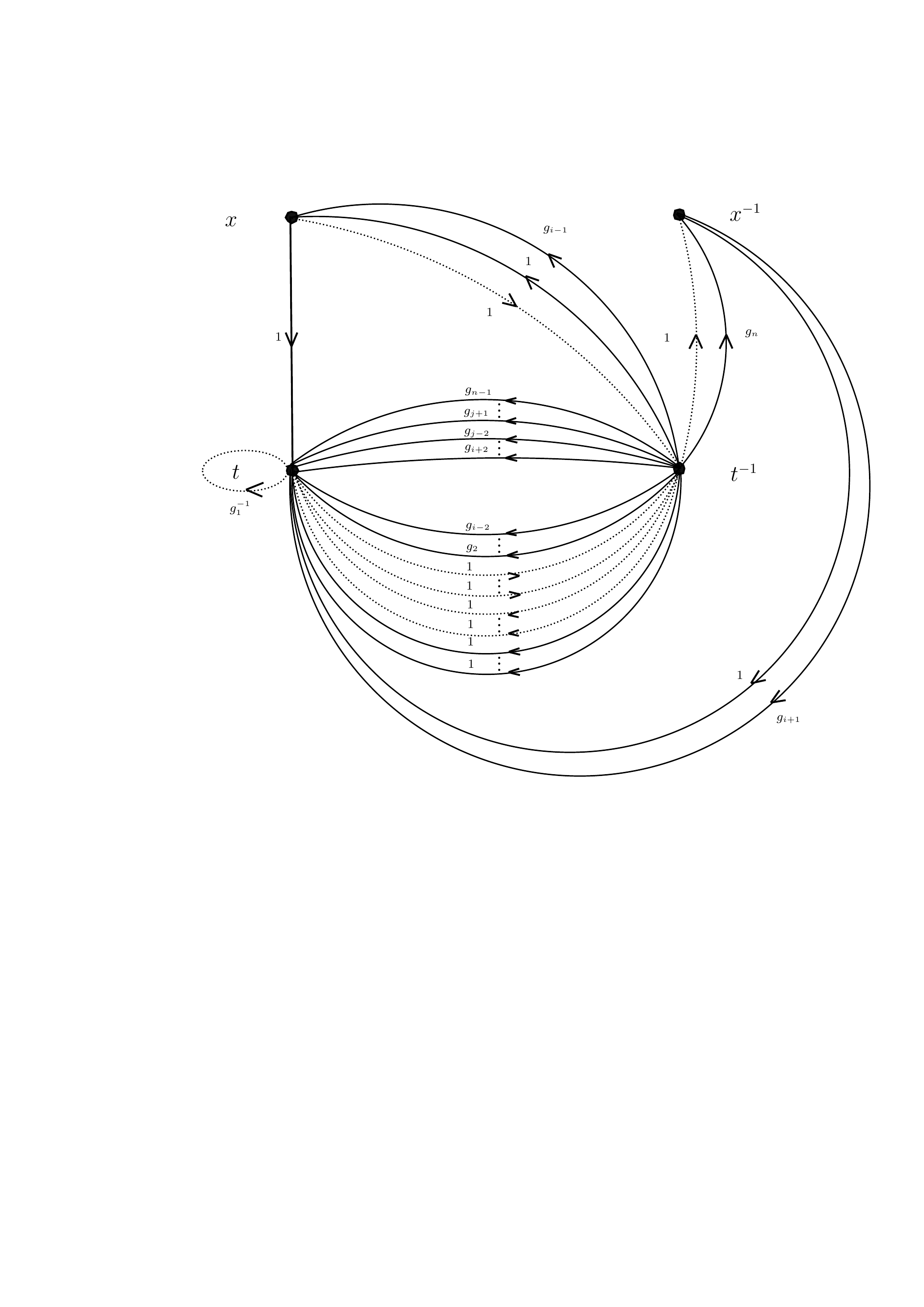}
				\vspace{-3.1in}
				\caption{Star graph $\Gamma$}
				\label{sgtype4}
				
			\end{center}
		\end{figure} 
		
%
		We assign a weight function $\theta$ such that $\theta(g_1^{-1})=\theta(g_{n})=0$. Moreover the weight of the edges $x   \rightarrow   t$ and $t^{-1}   \rightarrow   x^{-1}$ with label $1$ is also zero. All other edges are assigned a weight $1$. It is clear that the star graph $\mathcal{P}$ is aspherical over a torsion free group.

	\end{enumerate}
	
\end{proof}

\begin{rem}
	The results of Lemma \ref{lsgtype4} and Lemma \ref{lsgtype5} are valid even if we replace $t^{m_k}$ with $\Pi_{i=1}^{l} \; a_{i}t^{m_i}$. The proof is similar to the one given above. 
\end{rem}

\begin{theo}
	Let $\displaystyle s(t)=g_{1}E_{k_1}g_{2}E_{k_2} \, \cdots \, g_{n}E_{k_n}$ such that $g_1 , g_2,  \, \cdots  \, ,g_n \in G$ and  \\ $E_{k_i}=t a_{k_{i-1}+1} t \, \cdots \, t a_{k_i}t^{-1}$ with $k_0 =0$ and $k_i \geq 1$ for all $i \geq 1$. If $g_{j}=g_{1}^{-1}$ for all $j \geq 2$ then $s(t)=1$ has a solution over $G$.
\end{theo}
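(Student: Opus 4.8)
The plan is to mimic Lemmas~\ref{lsgtype1} and \ref{lsgtype2} (which are exactly the cases $n=2$ and $n=3$) and to reduce the statement, by a single substitution, to the asphericity of a two-relator relative presentation, to which the weight test is then applied. First I would substitute $x=t^{-1}g_{1}^{-1}t$. Because $g_{j}=g_{1}^{-1}$ for every $j\geq 2$, the coefficient standing between any two consecutive blocks equals $g_{1}^{-1}$, so each junction $t^{-1}g_{j}t$ joining $E_{k_{i}}$ to $E_{k_{i+1}}$ collapses to $x$, whereas the cyclic wrap-around junction $t^{-1}g_{1}t$, which closes $E_{k_{n}}$ onto the leading coefficient $g_{1}$, becomes $x^{-1}$. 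Every block thereby loses its leading $t$ and its trailing $t^{-1}$, and $s$ is transformed into the relator
$$R_{1}=x^{-1}a_{1}ta_{2}t\cdots ta_{k_{1}}\,x\,a_{k_{1}+1}t\cdots ta_{k_{2}}\,x\cdots x\,a_{k_{n-1}+1}t\cdots ta_{k_{n}},$$
in which every surviving power of $t$ is positive. Adjoining the defining relator $R_{2}=t^{-1}g_{1}^{-1}tx^{-1}$ produces a presentation $\mathcal{P}=\langle G,t,x\mid R_{1},R_{2}\rangle$ which is Tietze-equivalent to $\langle G,t\mid s\rangle$; hence it suffices to prove $\mathcal{P}$ aspherical, after which solvability follows by \cite{BP}.

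Next I would form the star graph $\Gamma$ on the vertices $t,t^{-1},x,x^{-1}$. The point of the substitution is that $R_{1}$ is now periodic: inside each block it contributes a chain of corners $t\to t$ labelled by the successive $a_{j}$, and consecutive blocks are separated by the letters $x^{\pm1}$, so $\Gamma$ is precisely the star graph of Lemmas~\ref{lsgtype1}--\ref{lsgtype2} with longer and more numerous arms. I would then transport the weight function verbatim: set $\theta=0$ on the edge of $R_{2}$ labelled $g_{1}^{-1}$, on the trivially labelled edge $t^{-1}\leftrightarrow x^{-1}$, and on the two extreme coefficient edges of $R_{1}$, namely those carrying $a_{1}$ and $a_{k_{n}}$, and set $\theta=1$ on every remaining edge. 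Condition (W3) is immediate since all weights lie in $\{0,1\}$. For (W1) one checks the two relators separately: $R_{2}$ has exactly three corners, two of which ($g_{1}^{-1}$ and the trivial one) are now weightless, so its sum of $(1-\theta)$ equals $2$; the long relator $R_{1}$ has its two extreme coefficient corners weightless while the remaining corners contribute $0$, so its sum already reaches $2$.

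The crux is (W2), and the uniformity over arbitrary $n$ and arbitrary block lengths $k_{i}$ is exactly what I expect to be the main obstacle, since a naive case analysis would blow up as in \cite{ABIA}. The saving observation is that the weightless edges all lie in a small region of $\Gamma$ — the gadget around $t^{-1},x^{-1},t$ carrying the labels $g_{1}^{\pm1}$ and $1$, together with the two extreme coefficient edges of $R_{1}$. A reduced closed path of weight at most $1$ can use at most one edge of weight $1$, so it is essentially trapped in this weightless region; reading off its label shows it multiplies out to a nontrivial power $g_{1}^{m}$ or $a_{k_{n}}^{m}$ with $m\neq0$, which cannot be trivial because $G$ is torsion-free. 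Hence no admissible cycle has weight below $2$, and (W2) holds; with (W1)--(W3) verified, $\mathcal{P}$ is aspherical and $s(t)=1$ is solvable over $G$. The only genuinely new point beyond the two base lemmas is to justify this confinement of low-weight cycles without enumerating the $n$-dependent graph, and I would do so by exploiting the block-periodicity of $R_{1}$: every corner interior to a block carries weight $1$, so any weight-$\leq 1$ cycle must avoid block interiors and is forced back into the weightless gadget, where its label is a power of a single coefficient.
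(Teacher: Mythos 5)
Your proposal reproduces the paper's own proof essentially verbatim: the same substitution $x=t^{-1}g_{1}^{-1}t$, the same two-relator presentation with $R_{2}=t^{-1}g_{1}^{-1}tx^{-1}$, the same weight function ($\theta=0$ on the edges labelled $g_{1}^{-1}$, $a_{1}$, $a_{k_{n}}$ and on the trivially labelled edge $t^{-1}\leftrightarrow x^{-1}$, $\theta=1$ elsewhere), and the same torsion-freeness argument for (W2) --- in fact your confinement argument via the weightless components is more explicit than the paper's ``clear from the star graph.'' The one point the paper flags but you pass over silently is the degenerate case $k_{i}=1$ for some or all $i$ (where subwords such as $x\,a\,x$ and $x^{-1}a\,x$ change the star graph, producing loops at $x^{\pm 1}$); the paper also omits those details as ``similar,'' and your component argument does adapt to them, so this is a shared, not a new, gap.
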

\begin{proof}
	
	First assume that $k_i \geq 2$ for all $i \geq 1$. The relative presentation for the above equation is given by $$\mathcal{P}=\langle A, t~|~ g_{1}E_{k_1}g_{2}E_{k_2} \, \cdots \, g_{n}E_{k_n}  \rangle.$$  We will show that the presentation $\mathcal{P}$ is aspherical. Expand this equation by replacing the values of $E_{k_{i}}$ to get $$g_{1}ta_{1}ta_{2}t \; \cdots \; ta_{k_{1}}t^{-1}g_{2}ta_{k_{1}+1}t \; \cdots \; ta_{k_{2}}t^{-1}g_{3}t \; \cdots \; ta_{k_{n}}t^{-1}=1. $$ Use the condition $g_{j}=g_{1}^{-1}$ for all $j \geq 2$ to get  $$g_{1}ta_{1}ta_{2}t \; \cdots \; ta_{k_{1}}t^{-1}g_{1}^{-1}ta_{k_{1}+1}t \; \cdots \; ta_{k_{2}}t^{-1}g_{1}^{-1}t \; \cdots \; ta_{k_{n}}t^{-1}=1. $$
	Substitute $x=t^{-1} g_1^{-1} t$ to get $$\mathcal{P}=\langle A, t~|~ x^{-1}a_1t \; \cdots \; ta_{k_{1}} x a_{k_{1}+1} t \; \cdots \; ta_{k_{2}} x a_{k_{2}+1} t \; \cdots \; ta_{k_{n}} =1 =t^{-1}g_{1}^{-1}tx^{-1}  \rangle.$$  The star graph $\Gamma$ for $\mathcal{P}$ is given by Figure \ref{sgtype1_9}.

	\begin{figure}
		\begin{center}
			\vspace{-1.0in}
			\hspace*{-1.5in}
			\includegraphics[width=0.75\textwidth, height=0.75\textheight]{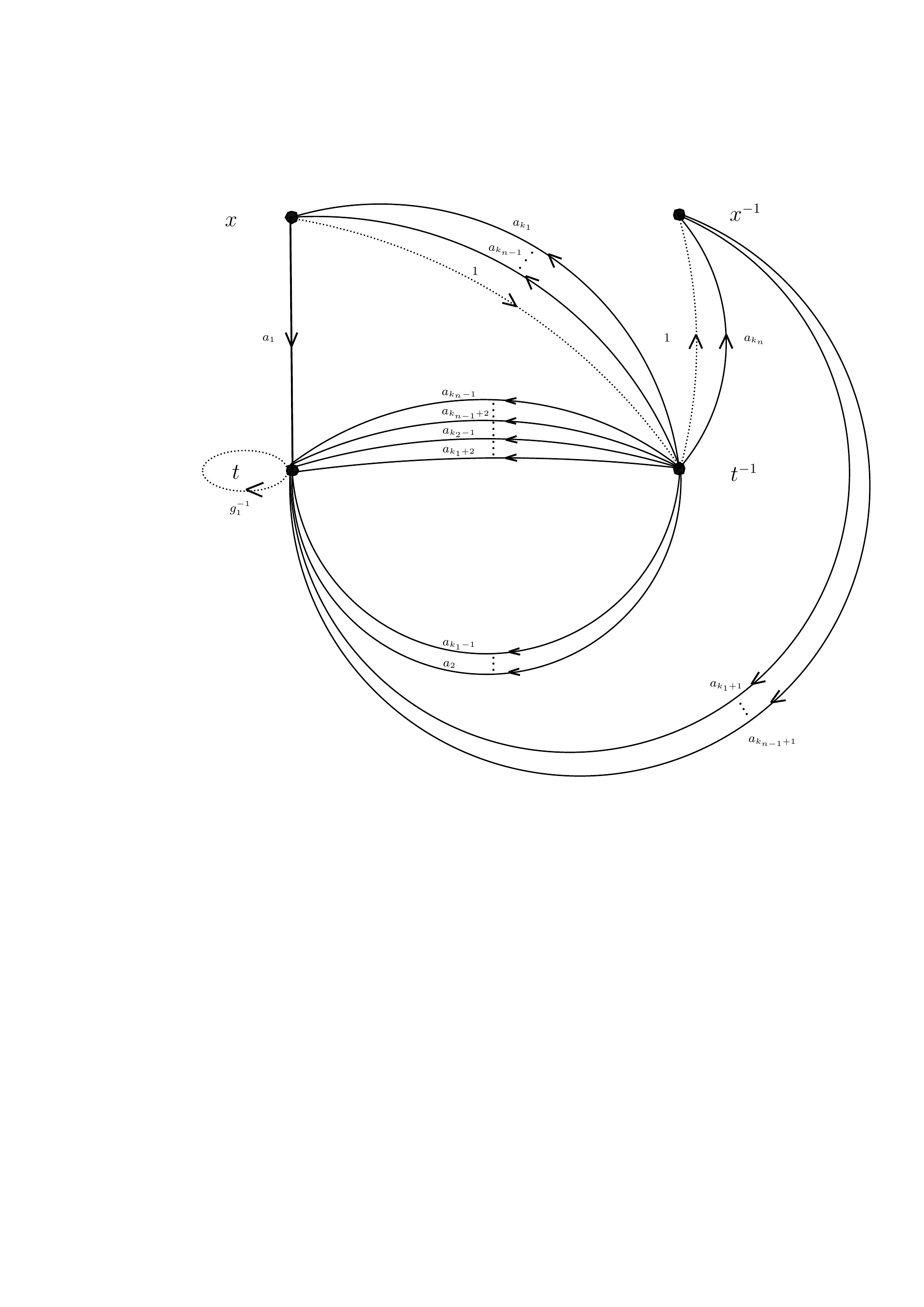}
			\vspace{-3.1in}
			\caption{Star graph $\Gamma$}
			\label{sgtype1_9}
			
		\end{center}
	\end{figure}

	
	We assign a weight function $\theta$ such that $\theta(g_1^{-1})=\theta(a_{1})=\theta(a_{k_{n}})=0$. Moreover the weight of the edge $t^{-1}   \rightarrow   x^{-1}$ with a label $1$ is also zero. All other edges are assigned a weight $1$. Now it is clear from the star graph that all admissible cycles of weight less than two imply that the group is a torsion group. Hence $\mathcal{P}$ is aspherical over a torsion free group.
	
	The proves for $k_i = 1$ for all $i \geq 1$ and for $k_i = 1$ for some $i \geq 1$ are similar therefore we omit the details.

\end{proof}

\begin{theo}
	Suppose $m_{1}, m_{2}, \; \cdots \;  ,m_{k_{n}}$ are positive integers such that $m_{k_{1}} > m_{k_i}$ and $m_{k_{i}+1} > m_{k_{1}+1}$ for all $i \geq 2$ with $m_{k_{1}+1}=m_{k_{n}}$. Let $\displaystyle s(t)=g_{1}E_{k_1}g_{2}E_{k_2} \, \cdots \, g_{n}E_{k_n}$ such that $g_1 , g_2  \, \cdots  \, g_n \in G$ and $E_{k_i}=t^{m_{k_{i-1}+1}} a_{k_{i-1}+1} t^{m_{k_{i-1}+2}} \, \cdots \, t^{m_{k_{i}-1}} a_{k_i}t^{-m_{k_{i}}}$ with $k_0 =0$ and $k_i \geq 2$ for all $i \geq 1$. If $g_{j}=g_{1}^{-1}$ for all $j \geq 2$ then $s(t)=1$ has a solution over $G$.
\end{theo}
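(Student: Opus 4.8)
The plan is to follow exactly the strategy of the preceding Theorem (the case in which every exponent equals $1$), adapting the substitution and the weight function to the higher powers in the spirit of Lemma~\ref{lsgtype5} and the Remark following it. First I would form the relative presentation $\mathcal{P}=\langle A,t \mid s(t)\rangle$, expand each block $E_{k_i}=t^{m_{k_{i-1}+1}}a_{k_{i-1}+1}\cdots t^{m_{k_i-1}}a_{k_i}t^{-m_{k_i}}$, and use $g_j=g_1^{-1}$ $(j\geq 2)$ to write
$$s(t)=g_{1}E_{k_1}g_{1}^{-1}E_{k_2}g_{1}^{-1}\cdots g_{1}^{-1}E_{k_n}.$$
The only negative powers of $t$ are the block-terminal $t^{-m_{k_i}}$, so every occurrence of $g_1^{\pm 1}$ sits between a negative and a positive power, which is the configuration handled in the earlier lemmas.

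Next I would introduce $x=t^{-m_{k_1}}g_1^{-1}t^{m_{k_1+1}}$, so that $g_1^{-1}=t^{m_{k_1}}x\,t^{-m_{k_1+1}}$. The inter-block factor between $E_{k_i}$ and $E_{k_{i+1}}$ is $t^{-m_{k_i}}g_1^{-1}t^{m_{k_i+1}}$, which rewrites as $t^{\,m_{k_1}-m_{k_i}}\,x\,t^{\,m_{k_i+1}-m_{k_1+1}}$; the first such factor $(i=1)$ is exactly $x$. Reading the relator cyclically, the leading $g_1$ amalgamates with the terminal $t^{-m_{k_n}}$ of $E_{k_n}$, producing the seam factor $t^{\,m_{k_1+1}-m_{k_n}}\,x^{-1}\,t^{\,m_1-m_{k_1}}$. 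Exactly as in Lemma~\ref{lsgtype5}, the hypotheses fix the orientations of these seam powers: $m_{k_1}>m_{k_i}$ and $m_{k_i+1}>m_{k_1+1}$ for $i\geq 2$ make both flanking exponents of every interior $x$ strictly positive, while $m_{k_1+1}=m_{k_n}$ deletes one wrap-around exponent and leaves a single occurrence of $x^{-1}$. This yields the two-relator presentation $\mathcal{P}=\langle A,t \mid \widetilde{s},\ t^{-m_{k_1}}g_1^{-1}t^{m_{k_1+1}}x^{-1}\rangle$ whose star graph $\Gamma$ has a definite combinatorial shape.

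I would then assign a weight function modelled on the previous proofs: put $\theta=0$ on the edges labelled $g_1^{-1}$, $a_1$ and $a_{k_n}$, and on the label-$1$ edge coming from the defining relation $t^{-m_{k_1}}g_1^{-1}t^{m_{k_1+1}}x^{-1}$, and $\theta=1$ on every remaining edge. Then (W3) is immediate; (W1) reduces to checking, for each of the two relators read cyclically, that $\sum_i(1-\theta)\geq 2$, the vanishing weights being placed precisely so that each relator contributes the required $2$; and for (W2) I would argue as in Lemma~\ref{lsgtype5} that every admissible cycle in $\Gamma$ of weight less than $2$ carries a label of the form $g_1^{m}=1$ or $a^{m}=1$ with $m\neq 0$, impossible in a torsion-free $G$. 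By \cite{BP} it follows that $\mathcal{P}$ is aspherical, and since it is orientable this gives injectivity of $G\to\langle G*F\mid s(t)\rangle$, i.e. solvability of $s(t)=1$.

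The main obstacle I anticipate is purely combinatorial: verifying that the three relations $m_{k_1}>m_{k_i}$, $m_{k_i+1}>m_{k_1+1}$ and $m_{k_1+1}=m_{k_n}$ really force $\Gamma$ into the same shape as in the exponent-one Theorem, so that the identical weight assignment still satisfies (W1)--(W3). The delicate point is (W2): one must confirm that none of the extra edges created by the non-trivial exponents can be assembled into an admissible cycle of weight below $2$ unless its label is a non-trivial power of a single group element. Once $\Gamma$ is shown to have the expected structure the remaining checks are routine and parallel Lemma~\ref{lsgtype5}, with the stated inequalities among the $m_{k_i}$ playing the role that the orientation conditions on the $m_i$ play there.
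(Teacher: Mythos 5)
Your proposal follows the paper's proof essentially verbatim: the same substitution $x=t^{-m_{k_1}}g_1^{-1}t^{m_{k_1+1}}$, the same use of $m_{k_1+1}=m_{k_n}$ to delete the wrap-around power and of the inequalities $m_{k_1}>m_{k_i}$, $m_{k_i+1}>m_{k_1+1}$ to keep the remaining seam powers positive, and the same weight test on the resulting two-relator star graph. The only (immaterial) difference is in the weight function: the paper zeroes $g_1^{-1}$, $a_{k_n}$ and the two label-$1$ edges $t^{-1}\leftrightarrow x^{-1}$ and $x\leftrightarrow t$, whereas you zero $a_1$ in place of the second label-$1$ edge, mirroring the exponent-one theorem; either choice supplies each relator with the two zero-weight positions needed for (W1), and your (W2) argument via torsion-freeness is exactly the paper's.
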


\begin{proof}
	
	The relative presentation for the above equation is given by $$\mathcal{P}=\langle A, t~|~ g_{1}E_{k_1}g_{2}E_{k_2} \, \cdots \, g_{n}E_{k_n}  \rangle.$$  We will show that the presentation $\mathcal{P}$ is aspherical. Expand this equation by replacing the values of $E_{k_{i}}$ to get 
	
	\begin{align*}
	g_{1}t^{m_1}a_{1}t^{m_2} \; \cdots \; t^{m_{k_{1}-1}}a_{k_{1}}t^{-m_{k_1}}g_{2}t^{m_{k_{1}+1}} a_{k_{1}+1}t^{m_{k_{1}+2}}  \; \cdots \; t^{m_{k_{2}-1}}a_{k_{2}}t^{- m_{k_{2}}}&g_{3}t^{m_{k_{2}+1}} \; \cdots \; \\& t{m_{k_{n}-1}}a_{k_{n}}t^{-m_{k_{n}}}=1.
	\end{align*}
	
	 Use the condition $g_{j}=g_{1}^{-1}$ for all $j \geq 2$ to get 
	 
	 \begin{align*}
	  g_{1}t^{m_1}a_{1}t^{m_2} \; \cdots \; t^{m_{k_{1}-1}}a_{k_{1}}t^{-m_{k_1}}g_1^{-1}t^{m_{k_{1}+1}}a_{k_{1}+1}t^{m_{k_{1}+2}} \; \cdots \; t^{m_{k_{2}-1}} & a_{k_{2}}t^{- m_{k_{2}}}g_1^{-1}t^{m_{k_{2}+1}} \; \cdots \; \\& t{m_{k_{n}-1}}a_{k_{n}}t^{-m_{k_{n}}}=1.
	  \end{align*}
	  
	Substitute $x=t^{-m_{k_1}}g_1^{-1}t^{m_{k_{1}+1}}$ to get
	
	\begin{align*}
	 t^{m_{k_{1}+1}-m_{k_{n}}}x^{-1} t^{m_{1}-m_{k_{1}}} a_{1}t^{m_2} \; \cdots \; t^{m_{k_{1}-1}}a_{k_{1}} x a_{k_{1}+1}&t^{m_{k_{1}+2}} \; \cdots \; t^{m_{k_{2}-1}} a_{k_{2}} t^{m_{k_{1}}-m_{k_{2}}}  x \\& t^{m_{k_{2}+1}-m_{k_{1}+1}} \; \cdots \;  t{m_{k_{n}-1}}a_{k_{n}}=1. 
	 \end{align*}
	
	Using $m_{k_{1}+1}=m_{k_{n}}$ we obtain 
	\begin{align*}
	x^{-1} t^{m_{1}-m_{k_{1}}} a_{1}t^{m_2} \; \cdots \; t^{m_{k_{1}-1}}a_{k_{1}} x a_{k_{1}+1}t^{m_{k_{1}+2}} \; \cdots \; t^{m_{k_{2}-1}}a_{k_{2}} t^{m_{k_{1}}-m_{k_{2}}} & x t^{m_{k_{2}+1}-m_{k_{1}+1}} \; \cdots \; \\& t{m_{k_{n}-1}}a_{k_{n}}=1.
	\end{align*}
  All other conditions of $m_{i}$'s ensure that powers of $t$ are positive unless specified. The star graph $\Gamma$ is given by Figure \ref{sgtype1_10}. 
  
 \begin{figure}
  \begin{center}
  	\vspace{-1.0in}
  	\hspace*{-1.5in}
  	\includegraphics[width=0.75\textwidth, height=0.75\textheight]{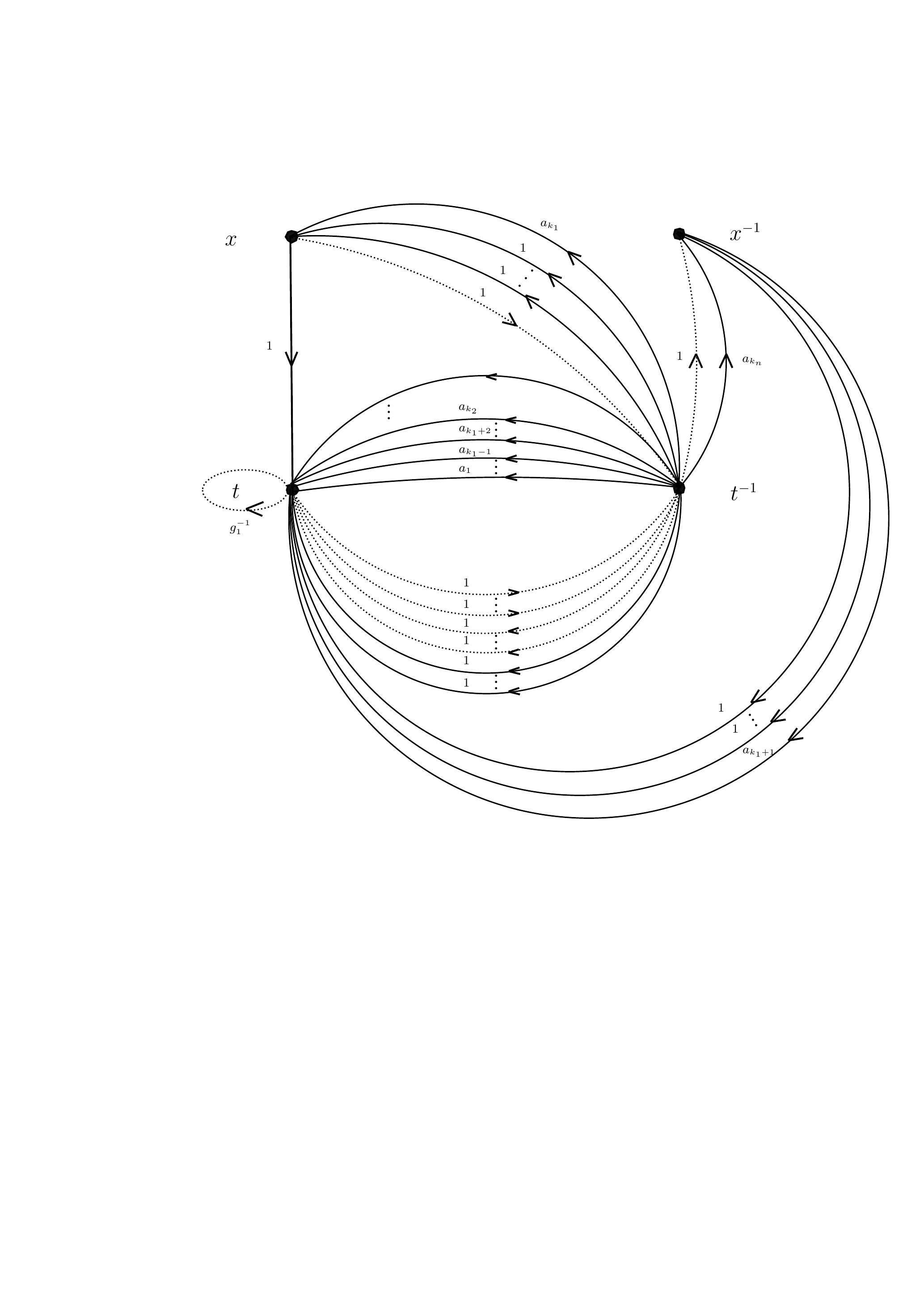}
  	\vspace{-2.7in}
  	\caption{Star graph $\Gamma$}
  	\label{sgtype1_10}
  	
  \end{center}
\end{figure}

	
	We assign a weight function $\theta$ such that $\theta(g_1^{-1})=\theta(a_{k_{n}})=0$. Moreover the weight of edges $t^{-1}   \rightarrow   x^{-1}$ and $x \rightarrow t$ with a label $1$ is also zero. All other edges are assigned a weight $1$. Now it is clear from the star graph that all admissible cycles of weight less than two imply that the group is a torsion group. Hence $\mathcal{P}$ is aspherical over a torsion free group.

\end{proof}

\end{document}